\theoremstyle{plain}
\newtheorem{thm}{Theorem}[section]
\newtheorem{obs}[thm]{Observation}
\newtheorem{remark}[thm]{Remark}
\newtheorem{lemma}[thm]{Lemma}
\newtheorem{proposition}[thm]{Proposition}
\def\keywordname{{\bfseries Keywords}}%
\def\keywords#1{\par\addvspace\medskipamount{\rightskip=0pt plus1cm
\def\and{\ifhmode\unskip\nobreak\fi\ $\cdot$
}\noindent\keywordname\enspace\ignorespaces#1\par}}
\def\subclassname{{\bfseries Mathematics Subject Classification
(2000)}\enspace}
\def\subclass#1{\par\addvspace\medskipamount{\rightskip=0pt plus1cm
\def\and{\ifhmode\unskip\nobreak\fi\ $\cdot$
}\noindent\subclassname\ignorespaces#1\par}}
\title{Krylov methods for low-rank commuting \\ generalized Sylvester equations}
\author{Elias Jarlebring \footnotemark[1], Giampaolo Mele
\thanks{Department of Mathematics, KTH Royal Institute of Technology, SeRC swedish e-science research center, Lindstedtsv\"agen 25, SE-100 44 Stockholm, Sweden, email: \{eliasj,gmele,eringh\}@kth.se},
Davide Palitta
\thanks{
Dipartimento di Matematica, Università di Bologna, Piazza di Porta S. Donato, 5, I-40127
Bologna, Italy, email: davide.palitta3@unibo.it
},
Emil Ringh \footnotemark[1]
}
\begin{document}

\maketitle

\begin{abstract}
  We consider generalizations of the Sylvester matrix equation,  consisting of the sum of a Sylvester operator and a linear operator $\Pi$ with a particular structure. More precisely, the commutator of the matrix coefficients of the operator $\Pi$ and the Sylvester operator coefficients are assumed to be matrices with low rank.  We show (under certain additional conditions) low-rank approximability of this problem, i.e., the solution to this matrix equation can be approximated with a low-rank matrix. Projection methods have successfully been used to solve other matrix equations with low-rank  approximability. We propose a new projection method for this class of matrix equations. The choice of subspace is a crucial ingredient for any projection method for matrix equations. Our method is based on an adaption and extension of the extended Krylov subspace method for Sylvester equations.  A constructive choice of the starting vector/block is derived from the low-rank commutators. We illustrate the 
effectiveness of our method by 
solving large-scale matrix equations arising from applications in control theory and the discretization of PDEs. The advantages of our approach in comparison to other methods are also illustrated.

\keywords{Generalized Sylvester equation \and Low-rank commutation \and Krylov subspace \and projection methods \and Iterative solvers \and Matrix equation}
\subclass{39B42 \and 65F10 \and 58E25 \and 47A46 \and 65F30}

\end{abstract}

\section{Introduction}
Let $\LLL:\RR^{n \times n} \rightarrow \RR^{n \times n}$ denote the \emph{Sylvester operator} associated
with the matrices $A,B\in\RR^{n\times n}$, i.e.,
\begin{equation}  \label{eq:L}
 \LLL(X)	:=AX+XB^T,
\end{equation}
and let $\Pi:\RR^{n \times n} \rightarrow \RR^{n \times n}$ denote the matrix
operator defined by 
\begin{equation} 
 \Pi(X)		:=\sum_{i=1}^m N_i X M_i^T,    \label{eq:Pi}
\end{equation}
where $m\ll n$.  The matrices $A,B$ are assumed to be large and sparse.
Given $C_1,C_2\in \RR^{n \times r}$ with $r \ll n$, 
our paper concerns the problem of computing $X\in\RR^{n \times n}$ such that 
\begin{align} \label{eq:Sylv}
 \LLL(X)+\Pi(X)=C_1C_2^T.
\end{align}
This equation is sometimes (e.g. \cite{DammBenner}) referred to as
the \emph{generalized Sylvester equation}.

Let $[A,B]:=AB-BA$ denote the \emph{commutator} of two matrices.
The structure of the operator $\Pi$ is assumed to be such that
the commutator of the Sylvester coefficients and the coefficients defining the operator $\Pi$ have low rank. In other words, we assume that there exist 
$U_i, \tilde  U_i \in \RR^{n \times s_i}$ and 
$ Q_i, \tilde  Q_i \in \RR^{n \times t_i}$ 
such that $s_i, t_i \ll n$ and the commutators fulfill
\begin{subequations}\label{eq:com}
\begin{eqnarray}
\ [A,N_i] &=& A N_i - N_i A = U_i \tilde U_i^T,   	\\
\ [B,M_i] &=& B M_i - M_i B = Q_i \tilde Q_i^T,    
\end{eqnarray}
\end{subequations}
for $i=1,\dots,m$. 

A recent successful method class for matrix equations defined by large and sparse matrices, are based on projection, typically called \emph{projection methods}  \cite{Simoncini2007, Druskin.Simoncini.11,Benner:2013:low}. We propose a new projection method for \eqref{eq:Sylv} under the low-rank commutation assumption~\eqref{eq:com}. 

Projection methods are typically derived from an assumption on the decay of the singular
values of the solution. More precisely, a necessary condition for the successful application of a projection method is low-rank approximability, i.e., the solution can be approximated by a low-rank matrix. We characterize the low-rank approximability of the solution to~\eqref{eq:Sylv} under the condition that the Sylvester operator $\LLL$ has a low-rank approximability property and that $\rho(\Linv \Pi)<1$. The low-rank approximability theory is presented in Section~\ref{sect:preliminaries}. The function $\rho(\cdot)$ denotes the (operator) spectral radius, i.e.,  
$\rho(\Lop):=\sup\{|\lambda|\, |\, \lambda \in \Lambda(\Lop)\}$.

The choice of the subspace is an important ingredient in any projection method.
We propose a particular choice of projection spaces by identifying certain 
properties of the solution to~\eqref{eq:Sylv}
based on our characterization of low-rank approximability and the low-rank commutation properties~\eqref{eq:com}. More precisely we use an extended Krylov subspace with an appropriate choice of the starting block. We present and analyse an expansion of the framework of extended Krylov subspace method for Sylvester equation (K-PIK) \cite{Simoncini2007,KrylovSylv} to the generalized Sylvester 
equation (Section~\ref{sect:Krylov}).

Linear matrix equations of the form \eqref{eq:Sylv} arise in different applications. For example, the \emph{generalized Lyapunov equation}, which corresponds to the special case where $B=A$, $M_i=N_i$ and $C_1=C_2$, arises in model order reduction of bilinear and stochastic systems, see e.g. \cite{DammBenner,DammDirectADI,Benner:2013:low} and references therein. Many problems arising from the discretization of PDEs can be formulated as generalized Sylvester equations \cite{ringh2016sylvester, Powell2017, Palitta2016}.
Low-rank approximability for matrix equations has been investigated in different settings: for Sylvester equations  \cite{grasedyck2004existenceH, baker2015fast, Grasedyck:2004:Existence}, generalized Lyapunov equations with low-rank correction \cite{Benner:2013:low} and more in general for linear systems with tensor product structure~\cite{kressner2010krylov,Grasedyck:2004:Existence}.
 
The so-called low-rank methods, which projection methods belong to, directly compute a low-rank approximation to the solution of~\eqref{eq:Sylv}. Many algorithms have been developed for the Sylvester equation: projection methods \cite{Simoncini2007, Druskin.Simoncini.11}, low-rank ADI \cite{Benner2009, Benner2014}, sign function method \cite{Baur2008, Baur2006}, Riemannian optimization methods \cite{kressner2016preconditioned,vandereycken2010riemannian} and many more. See the thorough presentation in \cite{Simoncini:2016:Computational}. For large-scale generalized Sylvester equations, fewer numerical methods are available in the literature. Moreover, they are often designed only for solving the generalized Lyapunov equation although they may be adapted to solve the generalized Sylvester equation. In \cite{Benner:2013:low}, the authors propose a bilinear ADI (BilADI) method which naturally extends the low-rank ADI algorithm for standard Lyapunov problems to generalized Lyapunov equations. A non-stationary 
iterative method is derived in \cite{Shank2016}, and in \cite{kressner2015truncated} a greedy low-rank technique is presented. In principle, it is always possible to consider the $n^2\times n^2$ linear system which stems from equation \eqref{eq:Sylv} by Kronecker transformations. There are specific methods for solving linear systems with tensor product structure, see \cite{kressner2015truncated,kressner2016preconditioned,ballani2013projection} and references therein. These problems can also be solved employing one of the many methods for linear systems presented in the literature. In particular, matrix-equation oriented versions of iterative methods for linear systems, together with preconditioning techniques, are present in literature. See, e.g., \cite[Section 5]{Benner:2013:low}, \cite{bouhamidi2008note,kressner2010krylov,li2010numerical}. 
To our knowledge, the low-rank commutativity properties \eqref{eq:com} have not been considered in the literature in the context of methods for matrix equations.

The paper is structured as follows. 
In Section~\ref{sect:preliminaries} we use a Neumann series (cf. \cite{Lancaster1970,Richter:1993:EFFICIENT}) with hypothesis $\rho(\Linv \Pi)<1$ to characterize the low-rank approximability of the solution to \eqref{eq:Sylv}.
In Section~\ref{sect:Krylov} we further characterize approximation properties of the solution to \eqref{eq:Sylv} by exploiting the low-rank commutation feature of the coefficients~\eqref{eq:com}. We use this characterization in the derivation of an efficient projection space.
In Section~\ref{sec:small} we present an efficient procedure for solving small-scale generalized Sylvester equations~\eqref{eq:Sylv}.
Numerical examples that illustrate the effectiveness of our strategy are reported in Section~\ref{sec:num}. 
Our conclusions are given in Section \ref{sec:conc}. 

We use the following notation. The vectorization operator $\vecc: \RR^{n \times n} \rightarrow \RR^{n^2}$ is defined such that $\vecc(A)$ is the vector obtained by stacking the columns of the matrix $A$ on top of one another. We denote by $\| \cdot \|_F$ the Frobenius norm, whereas $\| \cdot \|$ is any submultiplicative matrix norm. For a generic linear and continuous operator $\LLL : \RR^{n \times n} \rightarrow  \RR^{n \times n}$, the induced norm is defined as 
$\| \LLL \| := \inf_{\| A \|=1} \| \LLL(A) \|$. The identity and the zero matrices are respectively denoted by $I$ and $O$. 
We denote by $e_i$ the $i$-th vector of the canonical basis of $\mathbb{R}^n$ while $\otimes$ corresponds to the 
Kronecker product. The matrix obtained by stacking the matrices $A_1, \dots, A_n$ next to each other is denoted by $(A_1, \dots, A_n)$. In conclusion $\vspan(A)$ is the vector space generated by the columns of the matrix $A$ and $\sspan(\A)$ is the vector space generated by the vectors in the set $\A$.

\section{Representation and approximation of the \\ solution} \label{sect:preliminaries}

\subsection{Representation as Neumann series expansion} \label{sect:Neumann}

The following theorem gives sufficient conditions for the existence of a representation of the solution to a generalized Sylvester equation \eqref{eq:Sylv}
as a convergent series. This will be needed for the low-rank approximability
characterization in the following section, as well as in the derivation of a method for small generalized Sylvester equations (further described in Section~\ref{sec:small}).

\begin{thm}[Solution as a Neumann series] \label{thm:representation_solution}
Let $\LLL, \Pi: \RR^{n \times n} \rightarrow \RR^{n \times n}$ be linear operators such that $\LLL$ is invertible and $\rho(\LLL^{-1} \Pi) < 1$ and let $C \in \RR^{n \times n}$. The unique solution of the equation $\LLL(X)+\Pi(X)=C$ can be represented as
 \begin{align} \label{eq:Xseries}
  X = \sum_{j=0}^\infty Y_j,
 \end{align}
 where 
\begin{align} \label{eq:Yj}
\begin{cases}
   Y_0 		&:= \ \ \LLL^{-1} \left( C \right),		\\
   Y_{j+1} 	&:=-\LLL^{-1} \left(\Pi \left( Y_j \right)\right), \qquad \qquad j\geq0. \\
 \end{cases}
 \end{align}
\end{thm}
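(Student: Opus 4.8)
The plan is to interpret the equation as a fixed-point problem governed by a single operator and then invoke the Neumann series for that operator. Since $\LLL$ is invertible, applying $\LLL^{-1}$ to both sides of $\LLL(X)+\Pi(X)=C$ yields the equivalent equation $(\mathcal{I}+\LLL^{-1}\Pi)(X)=\LLL^{-1}(C)$, where $\mathcal{I}$ denotes the identity operator on $\RR^{n\times n}$. The statement then reduces to showing that $\mathcal{I}+\LLL^{-1}\Pi$ is invertible and that its inverse admits the convergent expansion $\sum_{j\geq 0}(-\LLL^{-1}\Pi)^j$.

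First I would set $\mathcal{T}:=\LLL^{-1}\Pi$ and note that, because $\RR^{n\times n}$ is finite dimensional, $\mathcal{T}$ can be identified via $\vecc$ with an $n^2\times n^2$ matrix, so that $\rho(\mathcal{T})$ is genuinely the largest modulus of its eigenvalues. The hypothesis $\rho(\mathcal{T})<1$ guarantees in particular that $-1\notin\Lambda(\mathcal{T})$, hence $\mathcal{I}+\mathcal{T}$ is invertible; this already gives existence and uniqueness of $X=(\mathcal{I}+\mathcal{T})^{-1}\LLL^{-1}(C)$.

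The key step is to identify this inverse with the operator Neumann series $\sum_{j=0}^{\infty}(-\mathcal{T})^j$. Here I would use Gelfand's formula $\rho(\mathcal{T})=\lim_{j\to\infty}\|\mathcal{T}^j\|^{1/j}$: since $\rho(-\mathcal{T})=\rho(\mathcal{T})<1$, choosing any $q$ with $\rho(\mathcal{T})<q<1$ gives $\|(-\mathcal{T})^j\|=\|\mathcal{T}^j\|\leq q^j$ for all $j$ sufficiently large, so the partial sums of $\sum_j(-\mathcal{T})^j$ form a Cauchy sequence in the (finite dimensional, hence complete) space of operators, and the series converges. The telescoping identity $(\mathcal{I}+\mathcal{T})\sum_{j=0}^{J}(-\mathcal{T})^j=\mathcal{I}-(-\mathcal{T})^{J+1}\to\mathcal{I}$ then identifies the limit with $(\mathcal{I}+\mathcal{T})^{-1}$, whence $X=\sum_{j=0}^{\infty}(-\mathcal{T})^j\LLL^{-1}(C)$.

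Finally I would reconcile this closed form with the recursion~\eqref{eq:Yj} by a one-line induction: $Y_0=\LLL^{-1}(C)=(-\mathcal{T})^0\LLL^{-1}(C)$, and if $Y_j=(-\mathcal{T})^j\LLL^{-1}(C)$ then $Y_{j+1}=-\LLL^{-1}\Pi(Y_j)=-\mathcal{T}Y_j=(-\mathcal{T})^{j+1}\LLL^{-1}(C)$, so $\sum_j Y_j$ is exactly the series above and~\eqref{eq:Xseries} follows. I expect the convergence argument to be the main obstacle: the spectral radius controls only the eigenvalues of $\mathcal{T}$, not its norm (indeed $\|\mathcal{T}\|$ may well exceed $1$), so the naive geometric bound $\|\mathcal{T}^j\|\leq\|\mathcal{T}\|^j$ is useless, and one genuinely needs Gelfand's formula — or, equivalently, a Jordan-form argument showing $\mathcal{T}^j\to O$ — to pass from the spectral condition $\rho(\mathcal{T})<1$ to summability of the series.
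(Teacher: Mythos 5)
Your proposal is correct and follows essentially the same route as the paper's proof: invert $\LLL$ to get $(\mathcal{I}+\LLL^{-1}\Pi)(X)=\LLL^{-1}(C)$, expand $(\mathcal{I}+\LLL^{-1}\Pi)^{-1}$ as a convergent Neumann series, and identify the terms with the recursion \eqref{eq:Yj} by induction. The only difference is one of self-containedness: where the paper cites Kato for convergence of the operator Neumann series under the spectral-radius hypothesis, you prove it directly via Gelfand's formula and the telescoping identity, and your closing remark correctly pinpoints why this is needed, namely that $\rho(\LLL^{-1}\Pi)<1$ does not imply $\|\LLL^{-1}\Pi\|<1$, so the naive geometric bound is unavailable.
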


\begin{proof}
By using the invertibility of $\LLL$ we have $X=(I+\Linv \Pi)^{-1} \Linv(C)$ and 
with the assumption $\rho(\LLL^{-1} \Pi)<1$ we can express the operator $(I+\LLL^{-1} \Pi)^{-1}$ as a convergent Neumann series (for operators as, e.g., in \cite[Example 4.5]{Kato:1995:Pertubation}). In particular, we obtain 
\begin{align*} 
 X =  \sum_{j=0}^\infty (-1)^j \left( \LLL^{-1} \Pi \right)^j \LLL^{-1} \left( C \right).
\end{align*}
The relation \eqref{eq:Xseries} follows by defining 
$Y_j := (-1)^j \left( \LLL^{-1} \Pi \right)^j \LLL^{-1} \left( C \right)$. By induction it follows that the relations~\eqref{eq:Yj} are fulfilled.
\end{proof}

\begin{remark} \label{rmk:XN}
Theorem~\ref{thm:representation_solution} can be used to construct an approximation to the solution of $\LLL(X)+\Pi(X)=C$ by truncating the series \eqref{eq:Xseries} analogous to the general form in \cite[(4.23)]{Kato:1995:Pertubation}.
In particular, let
 \begin{align} \label{eq:XN}
  X^{(\ell)} := \sum_{j=0}^\ell Y_j,
 \end{align}
 where $Y_j$ are given by \eqref{eq:Yj}. The truncation error can be bounded as follows
 \begin{align*} 
  \| X - X^{(\ell)} \| \le \| \Linv(C) \| \frac{\rho(\LLL^{-1}\Pi)^{\ell+1}}{1-\rho(\LLL^{-1}\Pi)}.
 \end{align*}
\end{remark}
If $\LLL$ and $\Pi$ are respectively the operators~\eqref{eq:L} and \eqref{eq:Pi}
that define the generalized Sylvester equation~\eqref{eq:Sylv}, then the truncated Neumann series \eqref{eq:XN} can be efficiently computed for small scale problems. In particular, this approach can be used in the derivation of a numerical method for solving small scale generalized Sylvester equations as illustrated in Section~\ref{sec:small}.

\subsection{Low-rank approximability} \label{sect:low_rank}
We now use the result in the previous section
to show that the solution to \eqref{eq:Sylv} can often
be approximated by a low-rank matrix.
We base the reasoning on low-rank approximability properties
of $\LLL$. Our result requires the explicit use of certain conditions on the spectrum of matrix coefficients of $\LLL$. Under these specific conditions, the solution to a Sylvester equation with low-rank right-hand side can be approximated by a low-rank matrix, see \cite[Section 4.1]{Simoncini:2016:Computational}. In this sense, we can extend several results concerning the low-rank approximability for the solution to the Sylvester equation to the case of generalized Sylvester equations under the assumption $\rho(\Linv\Pi)<1$. 
More precisely, the truncated Neumann series~\eqref{eq:XN} is obtained by summing the solutions to the Sylvester equations~\eqref{eq:Yj}. Note that, under the low-rank approximability assumption of $\LLL$, the right-hand side of the Sylvester equations~\eqref{eq:Yj} is a low-rank matrix since we assume that $C$ is a low-rank matrix and $m \ll n$. We formalize this argument and present a new characterization of the low-rank approximability of the solution to~\eqref{eq:Sylv} by adapting one of the most commonly used low-rank approximability result for
Sylvester equations~\cite{Grasedyck:2004:Existence}. 

We now briefly recall some results presented in \cite{Grasedyck:2004:Existence},
for our purposes. Suppose that the matrix coefficients representing $\LLL$ are such that $\lambda(A)\cup\lambda(B) \subset \CC_-$. Let $M\in\CC^{n \times n}$ be such that $\lambda(M) \subset \CC_-$, then its inverse can be expressed as $M^{-1} = \int_0^\infty \exp(tM) dt$. 
The integral can be approximated with the following quadrature formula 
\begin{align} \label{eq:quadrature_approx}
 M^{-1} = \int_0^\infty \exp(tM) dt \approx \sum_{j=-k}^k w_j \exp(t_j M),
\end{align}
where the weights $w_j$ and nodes $t_j$ are given in \cite[Lemma~5]{Grasedyck:2004:Existence}.
More precisely, we have an explicit formula for the approximation error
\begin{align} \label{eq:error_bound_intgral}
\norm{ \int_0^\infty \exp(tM) dt -  \sum_{j=-k}^k w_j \exp( t_j M) } \le K e^{-\pi \sqrt{k}},
\end{align}
where $K$ is a constant that only depends on the spectrum of $M$.
The solution to the Sylvester equation $\LLL(X)=C$  can be explicitly expressed as
$\vecc(X) = (I \otimes A + B \otimes I)^{-1} \vecc(C)$. The solution to this linear system can be approximated by using \eqref{eq:quadrature_approx} for approximating 
the inverse of $I \otimes A + B \otimes I$.
Let $\Linvk: \RR^{n \times n} \rightarrow \RR^{n \times n}$ be the linear operator such that $\Linvk(C)$ corresponds to the approximation \eqref{eq:quadrature_approx}. More precisely, the operator $\Linvk$ satisfies
\begin{align*}
 \vecc( \Linvk(C) ) =  \sum_{j=-k}^k w_j 
 \left[ \exp( t_j B) \otimes  \exp(  t_j A) \right]
 \vecc(C).
\end{align*}
By using the properties of the Kronecker product, it can be explicitly expressed as
\begin{align}\label{eq:Lkdef_explicit}
 \Linvk(C) =  \sum_{j=-k}^k w_j \exp( t_j A) C  \exp(  t_j B^T).
\end{align}
In terms of operators, the error bound \eqref{eq:error_bound_intgral} is $\|\Linv- \Linvk\| \le K e^{-\pi \sqrt{k}}$. The result of the above discussion is summarized in the following remark, which directly follows from \eqref{eq:Lkdef_explicit} or \cite[Lemma~7]{Grasedyck:2004:Existence}, \cite[Lemma~2]{Benner:2013:low}. 

\begin{remark} \label{rmk:SylvLR}
The solution to the Sylvester equation $\LLL(X)=C$ can be approximated by $\bar X = \Linvk(C)$ where $\| X - \bar X \| \le \| C \| K e^{-\pi \sqrt{k}}$,  $\rank(\bar X) \le (2k+1) r$, $K$ is a constant that depends on the spectrum of $\LLL$ and $r$ is the rank of $C$. 
\end{remark}

The following theorem concerns the low-rank approximability of the solution to~\eqref{eq:Sylv}. More precisely, it provides a generalization of Remark~\ref{rmk:SylvLR} to the case of generalized Sylvester equations by using the Neumann series characterization
in Theorem~\ref{thm:representation_solution}. 

\begin{thm}[Low-rank approximability]\label{thm:low_rank_sol}
Let $\LLL$ be the Sylvester operator \eqref{eq:L}, $\Pi$ the linear operator  \eqref{eq:Pi},  $C_1,C_2 \in \RR^{n \times r}$ and $k$ a positive integer.
Let $X^{(\ell)}$ be the truncated Neumann series~\eqref{eq:XN}.
Then there exists a matrix $\bar X^{(\ell)}$ such that  
 \begin{align}\label{eq:rank:bound} 
 \rank(\bar X^{(\ell)})  \le  (2k+1) r + \sum_{j=1}^\ell (2k+1)^{j+1} m^j r, 
 \end{align}
 and 
 \begin{align} \label{eq:XN_approx_lr}
  \norm{X^{(\ell)} - \bar X^{(\ell)}} \le \bar K e^{-\pi \sqrt{k}},
 \end{align}
where $\bar K$ is a constant that does not depend on $k$ and only depends on $\LLL$ and $\ell$.
 
\end{thm}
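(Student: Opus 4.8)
The plan is to mirror the exact Neumann iteration~\eqref{eq:Yj} by an \emph{inexact} iteration in which every application of $\Linv$ is replaced by its low-rank quadrature surrogate $\Linvk$ from~\eqref{eq:Lkdef_explicit}. Writing $C := C_1 C_2^T$ (which has rank at most $r$), I would define $\bar Y_0 := \Linvk(C)$ and $\bar Y_{j+1} := -\Linvk\!\left(\Pi(\bar Y_j)\right)$ for $j \ge 0$, and set $\bar X^{(\ell)} := \sum_{j=0}^\ell \bar Y_j$. The theorem then separates into an independent rank estimate and an error estimate for this constructed matrix, each proved by induction on $j$.

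For the rank bound~\eqref{eq:rank:bound} I would use two elementary facts. First, applying $\Linvk$ multiplies the rank by at most $2k+1$, since~\eqref{eq:Lkdef_explicit} expresses $\Linvk(D)$ as a sum of $2k+1$ terms $w_j \exp(t_j A)\, D\, \exp(t_j B^T)$, each of rank at most $\rank(D)$. Second, applying $\Pi$ multiplies the rank by at most $m$, since $\Pi(D) = \sum_{i=1}^m N_i D M_i^T$. Thus $\rank(\bar Y_0) \le (2k+1)r$, and $\rank(\bar Y_j) \le (2k+1)\,m\,\rank(\bar Y_{j-1})$ for $j \ge 1$, which unrolls to $\rank(\bar Y_j) \le (2k+1)^{j+1} m^j r$. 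Summing over $j = 0,\dots,\ell$ and subadditivity of the rank yield exactly the claimed bound.

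For the error bound~\eqref{eq:XN_approx_lr} I would estimate $\epsilon_j := \norm{Y_j - \bar Y_j}$ recursively. Let $\delta := \norm{\Linv - \Linvk} \le K e^{-\pi\sqrt{k}}$ denote the operator form of~\eqref{eq:error_bound_intgral}; the base case gives $\epsilon_0 \le \delta\,\norm{C}$ directly from Remark~\ref{rmk:SylvLR}. For the inductive step I would insert the hybrid term $\Linvk(\Pi(Y_j))$ and use linearity to split
\begin{align*}
Y_{j+1} - \bar Y_{j+1} = -(\Linv - \Linvk)\!\left(\Pi(Y_j)\right) - \Linvk\!\left(\Pi(Y_j - \bar Y_j)\right),
\end{align*}
whence $\epsilon_{j+1} \le \delta\,\norm{\Pi}\,\norm{Y_j} + \norm{\Linvk}\,\norm{\Pi}\,\epsilon_j$. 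Since the exact iterates satisfy $\norm{Y_j} \le \norm{\Linv \Pi}^{\,j}\norm{\Linv(C)}$, they are bounded independently of $k$. Hence each $\epsilon_j$ equals $\delta$ times a constant depending only on $\LLL$, $\Pi$, $C$ and $\ell$, and summing $\epsilon_0 + \cdots + \epsilon_\ell$ produces the factor $\bar K e^{-\pi\sqrt{k}}$.

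The main obstacle is keeping $\bar K$ genuinely independent of $k$. The danger is the factor $\norm{\Linvk}$ in the error recursion: if it grew with $k$, the $\ell$-fold product arising when the recursion is unrolled would spoil the uniform bound. I would control this by the crude estimate $\norm{\Linvk} \le \norm{\Linv} + \norm{\Linv - \Linvk} \le \norm{\Linv} + K$, valid for all $k \ge 0$ because $e^{-\pi\sqrt{k}} \le 1$. Since the sum~\eqref{eq:XN} has a fixed finite number $\ell+1$ of terms, only finitely many such $k$-independent factors are multiplied, so the accumulated constant stays bounded in $k$, which completes the argument.
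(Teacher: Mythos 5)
Your proposal is correct, and it uses the paper's central construction: the paper likewise defines the inexact iterates $\bar Y_0 := \Linvk(C_1C_2^T)$, $\bar Y_{j+1} := -\Linvk(\Pi(\bar Y_j))$, sets $\bar X^{(\ell)} := \sum_{j=0}^\ell \bar Y_j$, obtains the rank bound \eqref{eq:rank:bound} by iterating Remark~\ref{rmk:SylvLR} through this recursion exactly as you do, and proves \eqref{eq:XN_approx_lr} by a perturbation recursion for $\|Y_j - \bar Y_j\|$. Where you genuinely deviate is the decomposition driving that recursion. The paper inserts the hybrid term $\Linv(\Pi(\bar Y_j))$, which gives
\begin{equation*}
\|Y_{j+1}-\bar Y_{j+1}\| \le \beta\,\|Y_j-\bar Y_j\| + K e^{-\pi\sqrt{k}}\,\|\Pi\|\,\|\bar Y_j\|, \qquad \beta := \|\Linv\Pi\|,
\end{equation*}
so its inhomogeneous term involves the \emph{inexact} iterates; bounding $\|\bar Y_j\|$ then requires $k$-uniform bounds on $\beta_k := \|\Linvk\Pi\|$ and $\|\Linvk\|$, which the paper justifies only by an appeal to the convergence $\Linvk\to\Linv$ and continuity. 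You instead insert $\Linvk(\Pi(Y_j))$, so your inhomogeneous term involves the \emph{exact} iterates, which satisfy $\|Y_j\|\le \beta^j\|\Linv(C)\|$ and are therefore $k$-independent with no extra work; the only $k$-dependent factor left in your recursion is $\|\Linvk\|$, which you bound uniformly and explicitly by $\|\Linv\|+K$ via the triangle inequality and \eqref{eq:error_bound_intgral}. The two splittings are mirror images and of comparable length, but yours makes fully explicit precisely the step the paper treats most informally (uniform boundedness in $k$), which is a modest gain in rigor; the paper's version, for its part, keeps the contraction factor equal to the $k$-free quantity $\beta$, which is the more natural constant when one additionally assumes $\rho(\Linv\Pi)<1$ and wants to let $\ell$ grow.
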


\begin{proof}
Let $\LLL_k$ be the operator \eqref{eq:Lkdef_explicit} and consider the sequence 
\begin{align} \label{eq:YiLR}
\begin{cases}
  \bar Y_0 	&:= \ \ \Linvk (C_1 C_2^T),		\\
  \bar Y_{j+1} 	&:=  - \Linvk (\Pi(\bar Y_{j})), \quad j \geq 0.
\end{cases}
\end{align}
Define $\beta:=\| \Linv \Pi \|$ and $\beta_k:=\| \Linvk \Pi \|$. By using Remark~\ref{rmk:SylvLR} we have  
 \begin{align*}
  \| Y_{j+1} - \bar Y_{j+1} \|   & \le	\| \Linv (\Pi (Y_{j})) - \Linv (\Pi (\bar Y_{j})) \| + \| \Linv (\Pi (\bar Y_{j})) - \Linvk (\Pi (\bar Y_{j})) \|		
    \nonumber  \\
  & \le \beta \| Y_{j} - \bar Y_{j} \| + K e^{-\pi \sqrt{k}} \| \Pi \| \| \bar  Y_{j}\|. 
 \end{align*} 
 From the above expression, a simple recursive argument shows that 
  \begin{align} \label{eq:Ydiff}
   \| Y_{j+1} - \bar Y_{j+1} \|  &\le \beta^{j+1}  \| Y_{0} - \bar Y_{0} \| + K e^{-\pi \sqrt{k}} \| \Pi \|  \sum_{t=0}^{j} \beta^{j-t} \| \bar Y_t \|.
 \end{align} 
Using the sub-multiplicativity of the operator norm, it holds that $\| \bar Y_{j} \| = \| \Linvk (\Pi( \bar Y_{j-1} )) \| \le \beta_k \| \bar Y_{j-1} \|$. In particular $\| \bar Y_j \| \le \beta_k^{j} \|\Linvk\| \| C_1C_2^T \|$, and therefore, by using Remark~\ref{rmk:SylvLR}, from \eqref{eq:Ydiff} it follows that 
  \begin{align} \label{eq:Ydiff_2}
\| Y_{j+1} - \bar Y_{j+1} \|   & \le \| C_1 C_2^T \|  K \left[ \beta^{j+1} +  \| \Pi \| \| \Linvk \|\sum_{t=0}^{j} \beta^{j-t} \beta_k^t \right]  e^{-\pi \sqrt{k}}.
 \end{align}
 Since $\Linvk$ converges to $\Linv$, and by using the continuity of the operators, we have that $\|\Linvk\|$ and $\beta_k$ are bounded by a constant independent of $k$.
 Therefore from \eqref{eq:Ydiff_2} it follows that there exists a constant $K_{j+1}$ independent of $k$ such that $\| Y_{j+1} - \bar Y_{j+1} \| \le K_{j+1} e^{-\pi \sqrt{k}}$.
 The relation \eqref{eq:XN_approx_lr} follows by defining $\bar X^{(\ell)} := \sum_{j=0}^\ell \bar{Y}_j$ and observing
  \begin{align*}
  \| X^{(\ell)} - \bar X^{(\ell)} \| & \le 
  \sum_{j=0}^\ell \| Y_j - \bar Y_j \| \le e^{-\pi \sqrt{k}} \sum_{j=0}^\ell K_j = \bar K  e^{-\pi \sqrt{k}},
 \end{align*}
 where $\bar K := \sum_{j=0}^\ell K_j$. The upper-bound~\eqref{eq:rank:bound} follows by Remark~\ref{rmk:SylvLR} iteratively applied to~\eqref{eq:YiLR}.
 \end{proof}

We want to point out that, although Theorem~\ref{thm:low_rank_sol} provides an explicit procedure for constructing an approximation to the solution of~\eqref{eq:Sylv}, we later consider a different class of methods. Theorem~\ref{thm:low_rank_sol} has only theoretical interest and it is used to motivate the employment of low-rank methods in the solution of \eqref{eq:Sylv}. Moreover, in the numerical simulations (Section~\ref{sec:num}), we have observed a  decay in the singular values of the solution to \eqref{eq:Sylv} that it is faster than the one predicted by 
Theorem~\ref{thm:low_rank_sol}.

\section{Structure exploiting Krylov methods}\label{sect:Krylov}

\subsection{Extended Krylov subspace method}
In this section we derive a method for \eqref{eq:Sylv} that belongs to the class called projection methods.
We briefly summarize the adaption of the projection method approach in our setting. 
Projection methods for matrix equations are iterative algorithms based on constructing two sequences of nested subspaces of $\mathbb{R}^n$, i.e., $\mathcal{K}_{k-1}\subset  \mathcal{K}_k$ and $\mathcal{H}_{k-1}\subset \mathcal{H}_k$.
Justified by the low-rank approximability of the solution,
projection methods construct approximations (of the solution to \eqref{eq:Sylv}) of the
form
\begin{equation}
  X_k=\mathcal{V}_k Z_k \mathcal{W}_k^T,\label{eq:Xk_asm}
\end{equation}
where $\mathcal{V}_k$ and $\mathcal{W}_k$ are matrices with orthonormal
columns representing respectively an orthonormal basis of $\mathcal{K}_k$ 
and $\mathcal{H}_k$. Note that low-rank approximability (in the sense illustrated in, e.g., Theorem~\ref{thm:low_rank_sol}) is a necessary condition for the success of an approximation of the type \eqref{eq:Xk_asm}.

The matrix $Z_k$ can be obtained by imposing the Galerkin orthogonality condition, namely the residual 
\begin{align} \label{eq:residual}
\R_k:=AX_k+X_kB^T+
\sum_{i=1}^m
N_iX_kM_i^T-C_1C_2^T,
\end{align}
is such that $\mathcal{V}_k^T \R_k \mathcal{W}_k=0$. 
This condition is equivalent to $Z_k$ satisfying the following small and dense generalized Sylvester equation, usually referred to as the \emph{projected problem,}
\begin{align} \label{eq:proj_problem}
  T_kZ_k+Z_kH_k^T+
  \sum_{i=1}^m
 G_{k,i}Z_kF_{k,i}^T =
 E_{k,1} E_{k,2}^T,
\end{align}
where,
\begin{subequations}
\begin{align}
\label{eq:proj_matrix}
& T_k:=\mathcal{V}_k^TA \mathcal{V}_k, 
& H_k:=\mathcal{W}_k^TB\mathcal{W}_k, \ \
&&E_{k,1}=\mathcal{V}_k^T C_1,
&&E_{k,2}=\mathcal{W}_k^T C_2, 		\\
\label{eq:proj_matrix2}
&G_{k,i}:=\mathcal{V}_k^TN_i \mathcal{V}_k, 
&F_{k,i}:=\mathcal{W}_k^TM_i\mathcal{W}_k,
&& i=1, \dots, m. \ \ \
\end{align}
\end{subequations}
The iterative procedure consists in expanding the spaces $\mathcal{K}_k$ and $\mathcal{H}_k$ until the norm of the residual matrix $\R_k$ \eqref{eq:residual} is sufficiently small.

A projection method is efficient only if the subspaces $\mathcal{K}_k$ and $\mathcal{H}_k$ are selected in a way that the projected matrix \eqref{eq:Xk_asm} is a good low-rank approximation to the solution without the dimensions of the spaces being large.
One of the most popular choices of subspace is the extended
Krylov subspace (although certainly not the only choice \cite{jaimoukha1994krylov,Druskin.Simoncini.11}).
Extended Krylov subspaces form the basis of the method
called Krylov-plus-inverted Krylov (K-PIK) \cite{Simoncini2007,KrylovSylv}.
For our purposes it is natural to define
extended Krylov subspaces with the 
notation of block Krylov subspaces used, e.g.,
in  \cite[Section 6]{Gutknecht:2007:Block}.
Given an invertible matrix $A\in\RRnn$ and $C\in \RR^{n\times r}$, an
extended block Krylov subspace can be defined as the sum of two vector spaces, more precisely 
$\EK_k(A,C):=\K_k(A,C)+\K_{k}(A^{-1},A^{-1}C)$, where 
\begin{align*}
\K_{k}(A,C):= 
\sspan
\left(
\left \{
p(A) C w
\ | \  
\deg(p) \le k
, \
w \in \RR^r
\right \}
\right),
\end{align*}
denotes the block Krylov subspace, $p \in \RR[x]$ is a polynomial, and $\deg(\cdot)$ is the degree function. The extended Krylov subspace method is a projection method where  $\mathcal{K}_k=\EK_k(A,\bar C_1)$, $\mathcal{H}_k=\EK_k(B,\bar C_2)$
and $\bar C_1$, $\bar C_2$ are called the starting blocks, which we will show
how to select in our setting in Sections~\ref{subsec:KrylovAndLowRank} and \ref{sect:low_rank_Krylov}.
The procedure is summarized in Algorithm~\ref{alg:Krylov}  where the matrices $L$ and $R$ are the low-rank factors of \eqref{eq:Xk_asm}, i.e., they are such that $X_k=LR^T$. Notice that in the case of generalized Lyapunov equations the matrices $V_k$ and $W_k$ are equal and Algorithm~\ref{alg:Krylov} can be optimized accordingly.

\begin{algorithm} \label{alg:Krylov}
\small
\caption{Extended Krylov subspace method for generalized Sylvester equations.}
\SetKwInOut{Input}{input}\SetKwInOut{Output}{output}
\Input{Matrix~coeff.:~$ A,B,N_1\dots,N_m,M_1,\dots,M_m \in \RR^{n \times n}$~,~$C_1,C_2 \in \RR^{n \times r}$ 	\\
Starting blocks: $\bar C_1, \bar C_2 \in \RR^{n \times \bar r}$  	\\
Maximum number of iterations: $d$
}
\Output{Low-rank factors: $L, R$}
\BlankLine
\nl Set $V_1=\orth{(\bar C_1, A^{-1} \bar C_1)}$,  
$W_2=\orth{(\bar C_2, B^{-1} \bar C_2)}$, 
$\mathcal{V}_{0}=\mathcal{W}_{0}=\emptyset$
\\
\For{$k = 1,2,\dots, d$}{
\nl $\mathcal{V}_{k}=(\mathcal{V}_{k-1}, V_{k})$ and
$\mathcal{W}_{k}=(\mathcal{W}_{k-1}, W_{k})$	\\
\nl Compute $T_k, H_k, E_{k,1}, E_{k,2}, G_{k,i}, F_{k,i}$ according to \eqref{eq:proj_matrix}-\eqref{eq:proj_matrix2}
\label{step:proj_mat}
\\
\nl Solve the \emph{projected problem} \eqref{eq:proj_problem} \label{step:proj_problem} \\
\nl Compute $\| \R_k\|_F$ according to \eqref{eq:residual_norm} \\
\If{$\| \R_k\|_F \le \tol$}{Break} 
\begin{tabular}{lcll}
\hspace{-0.3cm} \nl Set $V_k^{(1)}$: &first $\bar r$ columns of $V_k$; & Set $V_k^{(2)}$: &last $\bar r$ columns of $V_k$	
\\
\hspace{-0.3cm} \nl Set $W_k^{(1)}$: & \ first $\bar r$ columns of $W_k$; & Set $W_k^{(2)}$: &last $\bar r$ columns of $W_k$	\\
\end{tabular}
\nl $V'_{k+1}=(AV_k^{(1)}, A^{-1}V_k^{(2)})$ and $W'_{k+1}=(BW_k^{(1)}, B^{-1} W_k^{(2)})$ \\
\begin{tabular}{lcllll}
\hspace{-0.3cm} \nl 
$\widehat V_{k+1}$ & $\leftarrow$ & \mbox{block-orthogonalize} & $V'_{k+1}$ &w.r.t.& $\mathcal{V}_k$	
\label{step:genKa}
\\
\hspace{-0.3cm} \nl  $\widehat W_{k+1}$ & $\leftarrow$ & \mbox{block-orthogonalize} & $W'_{k+1}$ &w.r.t.& $\mathcal{W}_k$	
\end{tabular}
\nl 
$V_{k+1}=\orth{\widehat V_{k+1}}$ and 
$W_{k+1}=\orth{\widehat W_{k+1}}$ 
\label{step:genKb}
}
\nl Compute the decomposition $Z_k=\widehat L \widehat R^T$		\\
\nl Return $L=\mathcal{V}_k \widehat L$ and $R=\mathcal{W}_k \widehat R$
\end{algorithm}

\begin{remark} \label{rmk:S1S2}
The output of Algorithm~\ref{alg:Krylov} represents the factorization $X_k=L R^T$. Under the condition that $\|\R_k\|$ is small, $X_k$ is an approximation of the solution of the generalized Sylvester equation~\eqref{eq:Sylv} such that $\rank(X_k) \le 2 \bar r k$. 
By construction $\vspan(L) \subseteq \EK_k(A,\bar C_1)$ and $\vspan(R) \subseteq \EK_k(B,\bar C_2)$. For the case of the Sylvester equation, $m=0$, Algorithm \ref{alg:Krylov} can be employed with the natural choice of the starting blocks $\bar C_1=C_1$ and $\bar C_2=C_2$, as it has been shown, e.g., in \cite{Simoncini2007,KrylovSylv}.
\end{remark}

A breakdown in Algorithm~\ref{alg:Krylov} may occur in two situations. During the generation of the basis of the extended
Krylov subspaces, (numerical) loss of orthogonality may occur in Steps~\ref{step:genKa}-\ref{step:genKb}. 
This issue is present already for the Sylvester equation \cite{Simoncini2007,KrylovSylv} and we refer to \cite{Gutknecht:2007:Block} for a presentation of safeguard strategies that may mitigate the problem.  
We assume that the bases $\mathcal{V}_k$ and $\mathcal{W}_k$ have full rank.
The other situation where a breakdown may occur is in Step~\ref{step:proj_problem}.
It may happen that the projected problem \eqref{eq:proj_problem} is not solvable.
For the  Sylvester equation the solvability of the projected problem is guaranteed by the condition that the field of values of $A$ and $B$ are disjoint \cite[Section 4.4.1]{Simoncini:2016:Computational}.
We extend this result, which provides a way to verify the applicability of the
method (without carrying out the method). As illustrated in the following proposition,
for the generalized Sylvester equation we need an additional condition.
Instead of using the field of values, it is natural to phrase this condition
in terms of the ratio field of values (defined in, e.g., \cite{Einstein2011}).

\begin{proposition} 
Consider the generalized Sylvester equation \eqref{eq:Sylv} and assume that the field of values of $A$ and $B$ are disjoint, and that the ratio field of values of $\sum_{i=1}^m M_i \otimes N_i$ and $B\otimes I+I\otimes A$, i.e.,
{\small
\begin{align*}
 R
 \left(
  \sum_{i=1}^m
 M_i\otimes N_i,B\otimes I+I\otimes A
 \right)
 :=\left\{
 \frac{y^H\left(
 \sum_{i=1}^m M_i\otimes N_i
 \right)y}{y^H\left(B\otimes I+I\otimes A\right)y} \; \bigg| \; y\in\mathbb{C}^{n^2} \setminus 
 \left \{ 0 \right \} \right\},
  \end{align*}
  }is strictly contained in the open unit disk. 
 Then the projected problem~\eqref{eq:proj_problem} has a 
 unique solution.
\end{proposition}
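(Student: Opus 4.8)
The plan is to vectorize the projected problem and reduce the claim to the nonsingularity of a single matrix. Applying $\vecc(\cdot)$ to \eqref{eq:proj_problem} and using the identity $\vecc(GZF^T)=(F\otimes G)\vecc(Z)$, the projected problem is equivalent to the linear system $P_k\,\vecc(Z_k)=\vecc(E_{k,1}E_{k,2}^T)$ with
\begin{align*}
 P_k := I\otimes T_k + H_k\otimes I + \sum_{i=1}^m F_{k,i}\otimes G_{k,i}.
\end{align*}
Thus the projected problem has a unique solution if and only if $P_k$ is invertible, which is what I would prove.

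The key structural observation I would exploit is that $P_k$ is an orthogonal compression of the full Kronecker coefficient matrix appearing in the hypotheses. Setting $\Phi:=\mathcal{W}_k\otimes\mathcal{V}_k$, which has orthonormal columns because $\mathcal{V}_k$ and $\mathcal{W}_k$ do, the mixed-product property of the Kronecker product gives $S_k:=I\otimes T_k+H_k\otimes I=\Phi^T S\,\Phi$ and $\Theta_k:=\sum_{i=1}^m F_{k,i}\otimes G_{k,i}=\Phi^T\Theta\,\Phi$, where $S:=B\otimes I+I\otimes A$ and $\Theta:=\sum_{i=1}^m M_i\otimes N_i$ are precisely the matrices in the ratio field of values of the statement, and $P_k=S_k+\Theta_k$. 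Denoting by $W(\cdot)$ the field of values, a compression satisfies $W(S_k)\subseteq W(S)$; since the disjointness of the fields of values of $A$ and $B$ is the standard condition ensuring $0\notin W(S)$ and hence the unique solvability of the Sylvester part ($m=0$), cf. \cite[Section~4.4.1]{Simoncini:2016:Computational}, I conclude $z^H S_k z\neq 0$ for every nonzero $z$, and in particular that $S_k$ is invertible.

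I would then argue by contradiction. Suppose $P_k$ is singular, so that $S_k z=-\Theta_k z$ for some $z\neq 0$. Taking the inner product with $z$ and dividing by $z^H S_k z\neq 0$ yields
\begin{align*}
 \frac{z^H\Theta_k z}{z^H S_k z}=-1.
\end{align*}
Setting $y:=\Phi z$, which is nonzero since $\Phi$ has full column rank, and using the compression identities together with $(\Phi z)^H=z^H\Phi^T$ (as $\Phi$ is real), I get $z^H\Theta_k z=y^H\Theta y$ and $z^H S_k z=y^H S y\neq 0$. Hence the ratio above equals $y^H\Theta y/(y^H S y)$, an element of the ratio field of values $R(\Theta,S)$. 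This forces $-1\in R(\Theta,S)$, which contradicts the hypothesis that $R(\Theta,S)$ is strictly contained in the open unit disk, since $|{-1}|=1$. Therefore $P_k$ is nonsingular and the projected problem has a unique solution.

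The crux of the argument, and the step I expect to be the main obstacle, is recognizing that the projected (small) ratio field of values is contained in the original (large) one, $R(\Theta_k,S_k)\subseteq R(\Theta,S)$, through the single compression $\Phi=\mathcal{W}_k\otimes\mathcal{V}_k$; this is exactly what lets an a priori assumption on the data control the projected problem uniformly in $k$. A secondary point requiring care is ensuring that the denominator $z^H S_k z$ never vanishes, so that the generalized Rayleigh quotient is well defined and the division is legitimate — this is the precise role played by the field-of-values disjointness of $A$ and $B$, over and above the ratio-field-of-values assumption.
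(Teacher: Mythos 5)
Your proof is correct, and it reaches the conclusion by a route that genuinely differs from the paper's, even though both rest on the same decisive ingredient: with $\Phi:=\mathcal{W}_k\otimes\mathcal{V}_k$ (orthonormal columns), the projected Kronecker matrices are compressions, $S_k:=H_k\otimes I+I\otimes T_k=\Phi^T S\Phi$ and $\Theta_k:=\sum_{i=1}^m F_{k,i}\otimes G_{k,i}=\Phi^T\Theta\Phi$, where $S:=B\otimes I+I\otimes A$ and $\Theta:=\sum_{i=1}^m M_i\otimes N_i$, so that generalized Rayleigh quotients of the pair $(\Theta_k,S_k)$ lie in $R(\Theta,S)$. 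The paper argues at the operator level: invertibility of $\mathcal{L}_{proj}(Z)=T_kZ+ZH_k^T$ follows from the field-of-values assumption, Theorem~\ref{thm:representation_solution} reduces unique solvability of \eqref{eq:proj_problem} to the spectral-radius bound $\rho(\mathcal{L}_{proj}^{-1}\Pi_{proj})<1$, and this bound is verified by showing $|\lambda|<1$ for \emph{every} eigenvalue $\lambda$ of the pencil $(\Theta_k,S_k)$ via the same Rayleigh-quotient computation you perform. You instead vectorize and prove nonsingularity of $P_k=S_k+\Theta_k$ by contradiction: a nonzero null vector $z$ would give $z^H\Theta_kz/(z^HS_kz)=-1\in R(\Theta,S)$, impossible since $R(\Theta,S)$ lies in the open unit disk. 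Your route is more elementary --- it avoids the Neumann series entirely and needs only to exclude the single value $-1$, not to bound the whole pencil spectrum. What the paper's stronger intermediate statement buys is the bound $\rho(\mathcal{L}_{proj}^{-1}\Pi_{proj})<1$ itself, which is exactly the hypothesis under which the Neumann-series solver of Section~\ref{sec:small}, used by the paper in Step~\ref{step:proj_problem} of Algorithm~\ref{alg:Krylov}, is guaranteed to converge on the projected problem; your argument gives uniqueness but not that convergence guarantee. A minor point in your favor: you justify explicitly that the denominator $z^HS_kz$ never vanishes (via $0\notin W(S)$ and the compression), whereas the paper performs the corresponding division without comment.
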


\begin{proof} 
Let $\mathcal{L}_{proj}(Z):=T_kZ+ZH_k^T$ and 
 $\Pi_{proj}(Z):=
  \sum_{i=1}^m
 G_{k,i}ZF_{k,i}^T$. The projected problem \eqref{eq:proj_problem} is equivalently written as 
 $\mathcal{L}_{proj}(Z_k)+\Pi_{proj}(Z_k)=E_{k,1} E_{k,2}^T$.
 Since $A$ and $B$ have disjoint fields of values, $\mathcal{L}_{proj}$ is invertible \cite[Section 4.4.1]{Simoncini:2016:Computational}.
 From Theorem~\ref{thm:representation_solution} we know that it exists a unique solution $Z_k$ to \eqref{eq:proj_problem} 
if $\rho\left(\mathcal{L}_{proj}^{-1}\Pi_{proj}\right)<1$. 
This condition is equivalent to $|\lambda|<1$, where 
$(\lambda, v)\in \CC \times \mathbb{C}^{(kr)^2} \setminus \left \{ 0 \right \}$ 
is an eigenpair of the following generalized eigenvalue problem
 \begin{equation}\label{gen_eigen.1}
 \left(
 \sum_{i=1}^m
 F_{k,i}\otimes G_{k,i}
 \right)v=\lambda (H_k\otimes I+I\otimes T_k)v.
 \end{equation}
 Using the properties of the Kronecker product, equation \eqref{gen_eigen.1} can be written as
 {\small
 $$
 \sum_{i=1}^m
 (W_k^T\otimes V_k^T)\left(M_i\otimes N_i\right)\left(W_k\otimes V_k\right)v=\lambda
(W_k^T\otimes V_k^T)\left(B\otimes I+I\otimes A\right)\left(W_k\otimes V_k\right)v.$$
}
 By multiplying the above equation from the left with $v^H$ we have that 
 \begin{align*}
 |\lambda|
 &=\left|\frac{x^H\left(
 \sum_{i=1}^m
 M_i\otimes N_i\right)x}{x^H\left(B\otimes I+I\otimes A\right)x}\right|, \quad
 x:=\left(W_k\otimes V_k\right)v. 
 \end{align*}
 By using that $ R\left(
  \sum_{i=1}^m
 M_i\otimes N_i,B\otimes I+I\otimes A
 \right)$ is strictly contained in the unit circle we conclude that $|\lambda|<1$. 
\end{proof}

\begin{obs}
The computation of the matrices $T_k$, $H_k$ (Step~\ref{step:proj_mat}) and the orthogonalization of the new blocks $V_{k+1}, W_{k+1}$ (Steps~\ref{step:genKa}-\ref{step:genKb}) can be efficiently performed as in \cite[Section 3]{Simoncini2007} where a modified Gram-Schmidt method is employed in the orthogonalization. The matrices $G_{k,i}$ and $F_{k,i}$ (Step~\ref{step:proj_mat}) can be computed by extending the matrices $G_{k-1,i}$ and $F_{k-1,i}$ with a block-column and a block-row.   
Moreover, the matrix $X_k$ is never explicitly formed. In particular, the Frobenius norm of the residual \eqref{eq:residual} can be computed as
\begin{align} \label{eq:residual_norm}
 \|\R_k\|_F^2
 & = \|\tau_{k+1}(e_{k} \otimes I_{2r} )^TZ_k\|_F^2+
 \|Z_k(e_{k} \otimes I_{2r} )^T h_{k+1}^T\|_F^2.
\end{align}
This follows by replacing in \eqref{eq:residual} the following 
Arnoldi-like relations~\cite[equation (4)]{knizhnerman2010new}
 $$
 A \mathcal{V}_k=\mathcal{V}_kT_k+V_{k+1}\tau_{k+1}(e_{k} \otimes I_{2r} )^T, \quad 
 B \mathcal{W}_k=\mathcal{W}_k H_k+W_{k+1}h_{k+1}(e_{k} \otimes I_{2r} )^T.
 $$
\end{obs}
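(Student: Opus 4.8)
The plan is to handle the two assertions of the observation in turn: the incremental assembly of the projected coefficients, and the residual identity \eqref{eq:residual_norm}, which is the substantive part. For the assembly I would argue purely structurally. Since $\mathcal{V}_k=(\mathcal{V}_{k-1},V_k)$ and $\mathcal{W}_k=(\mathcal{W}_{k-1},W_k)$ by Step~2, the matrix $G_{k,i}=\mathcal{V}_k^TN_i\mathcal{V}_k$ inherits the nested block form
\begin{align*}
G_{k,i}=\begin{pmatrix}\mathcal{V}_{k-1}^TN_i\mathcal{V}_{k-1} & \mathcal{V}_{k-1}^TN_iV_k\\ V_k^TN_i\mathcal{V}_{k-1} & V_k^TN_iV_k\end{pmatrix}=\begin{pmatrix}G_{k-1,i} & \mathcal{V}_{k-1}^TN_iV_k\\ V_k^TN_i\mathcal{V}_{k-1} & V_k^TN_iV_k\end{pmatrix},
\end{align*}
so that only the trailing block-column and block-row are new at each step; the same computation gives $F_{k,i}$. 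For $T_k$ and $H_k$ I would simply note, as in \cite[Section~3]{Simoncini2007}, that the required entries are by-products of the modified Gram--Schmidt step carried out in Steps~\ref{step:genKa}--\ref{step:genKb}, so no additional multiplication by $A$ or $B$ is incurred.

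The core is \eqref{eq:residual_norm}. First I would insert $X_k=\mathcal{V}_kZ_k\mathcal{W}_k^T$ into \eqref{eq:residual} and replace $A\mathcal{V}_k$ and $\mathcal{W}_k^TB^T=(B\mathcal{W}_k)^T$ by the Arnoldi-like relations, giving
\begin{align*}
\R_k={}&\mathcal{V}_kT_kZ_k\mathcal{W}_k^T+V_{k+1}\tau_{k+1}(e_k\otimes I_{2r})^TZ_k\mathcal{W}_k^T\\
&+\mathcal{V}_kZ_kH_k^T\mathcal{W}_k^T+\mathcal{V}_kZ_k(e_k\otimes I_{2r})h_{k+1}^TW_{k+1}^T\\
&+\sum_{i=1}^mN_i\mathcal{V}_kZ_k\mathcal{W}_k^TM_i^T-C_1C_2^T.
\end{align*}
Next I would invoke the two structural properties supplied by the starting-block choice of Sections~\ref{subsec:KrylovAndLowRank}--\ref{sect:low_rank_Krylov}: the right-hand side is captured, $C_1=\mathcal{V}_kE_{k,1}$ and $C_2=\mathcal{W}_kE_{k,2}$, and each space is invariant under its coefficients, $N_i\mathcal{V}_k=\mathcal{V}_kG_{k,i}$ and $M_i\mathcal{W}_k=\mathcal{W}_kF_{k,i}$. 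These rewrite the $\Pi$ contribution as $\mathcal{V}_k\big(\sum_iG_{k,i}Z_kF_{k,i}^T\big)\mathcal{W}_k^T$ and the right-hand side as $\mathcal{V}_kE_{k,1}E_{k,2}^T\mathcal{W}_k^T$. Collecting every term of the form $\mathcal{V}_k(\cdot)\mathcal{W}_k^T$ produces the factor $T_kZ_k+Z_kH_k^T+\sum_iG_{k,i}Z_kF_{k,i}^T-E_{k,1}E_{k,2}^T$, which is zero because $Z_k$ solves the projected problem \eqref{eq:proj_problem}. Hence only the two boundary terms survive,
\begin{align*}
\R_k=V_{k+1}\tau_{k+1}(e_k\otimes I_{2r})^TZ_k\mathcal{W}_k^T+\mathcal{V}_kZ_k(e_k\otimes I_{2r})h_{k+1}^TW_{k+1}^T.
\end{align*}

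To pass to Frobenius norms I would exploit orthogonality. The first term has column space in $\vspan(V_{k+1})$ and row space in $\vspan(\mathcal{W}_k)$, the second has column space in $\vspan(\mathcal{V}_k)$ and row space in $\vspan(W_{k+1})$. Because $V_{k+1}^T\mathcal{V}_k=O$ (the new block is orthogonalized against $\mathcal{V}_k$ in Step~\ref{step:genKa}), the Frobenius inner product of the two terms equals $\operatorname{tr}\!\big(\mathcal{W}_k(\cdots)^TV_{k+1}^T\mathcal{V}_k(\cdots)W_{k+1}^T\big)=0$, so $\|\R_k\|_F^2$ splits as the sum of the two squared norms; using $V_{k+1}^TV_{k+1}=I$, $\mathcal{W}_k^T\mathcal{W}_k=I$ and the analogous identities for the second term, the outer orthonormal factors cancel and leave exactly the two small-matrix norms of \eqref{eq:residual_norm}. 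The step I expect to require the most care is the absorption of the $\Pi$ term, namely establishing $N_i\mathcal{V}_k=\mathcal{V}_kG_{k,i}$ and $M_i\mathcal{W}_k=\mathcal{W}_kF_{k,i}$: this invariance is precisely what the low-rank commutation assumption \eqref{eq:com} and the constructive starting block are designed to deliver, and without it the $\Pi$ contribution would leak outside $\vspan(\mathcal{V}_k)$ and $\vspan(\mathcal{W}_k)$ and add extra terms to \eqref{eq:residual_norm}. The containment $C_1\in\vspan(\mathcal{V}_k)$, $C_2\in\vspan(\mathcal{W}_k)$ is comparatively routine, since the starting blocks contain $C_1$ and $C_2$ by construction.
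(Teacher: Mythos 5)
Your skeleton matches the argument the paper compresses into its one-line justification: substitute $X_k=\mathcal{V}_kZ_k\mathcal{W}_k^T$ into \eqref{eq:residual}, replace $A\mathcal{V}_k$ and $B\mathcal{W}_k$ via the Arnoldi-like relations, cancel the $\mathcal{V}_k(\cdot)\mathcal{W}_k^T$ block using the projected problem \eqref{eq:proj_problem}, and split the Frobenius norm of the two surviving terms using $\mathcal{V}_k^TV_{k+1}=O$, $\mathcal{W}_k^TW_{k+1}=O$ and the orthonormality of the factors; your block-bordering argument for $G_{k,i}$, $F_{k,i}$ is also correct. The genuine gap is in the step you yourself flag as delicate: the identities $N_i\mathcal{V}_k=\mathcal{V}_kG_{k,i}$ and $M_i\mathcal{W}_k=\mathcal{W}_kF_{k,i}$ do \emph{not} follow from the low-rank commutation assumption \eqref{eq:com} together with the constructive starting blocks. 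They are equivalent to $\vspan(\mathcal{V}_k)$ and $\vspan(\mathcal{W}_k)$ being \emph{invariant} under the $N_i$ and $M_i$, whereas Theorem~\ref{thm:Krylov} only gives the containment $N\cdot\EK_{k}(A,C)\subseteq\EK_{k}(A,(NC,U))$ in a strictly enlarged space; correspondingly, the spaces in \eqref{eq:C1C2N} grow with $\ell$, and applying $N_i$ to a basis of $\GK_{\ell}(N_1,\dots,N_m;C_1)+\GK_{\ell-1}(N_1,\dots,N_m;U)$ generically produces degree-$(\ell+1)$ directions outside $\vspan(\mathcal{V}_k)$ (for instance, in the MIMO benchmark of Section~\ref{sec:num}, with $\bar C_1=(C,N_1C,U)$ and $N_1$ tridiagonal, a vector of the form $N_1^2Cw$ need not lie in the generated space). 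What the substitution actually yields, once $C_1C_2^T=\mathcal{V}_kE_{k,1}E_{k,2}^T\mathcal{W}_k^T$ is used, is
\begin{align*}
\R_k=V_{k+1}\tau_{k+1}(e_k\otimes I_{2r})^TZ_k\mathcal{W}_k^T+\mathcal{V}_kZ_k(e_k\otimes I_{2r})h_{k+1}^TW_{k+1}^T+\Big(\Pi(X_k)-\mathcal{V}_k\mathcal{V}_k^T\,\Pi(X_k)\,\mathcal{W}_k\mathcal{W}_k^T\Big),
\end{align*}
so \eqref{eq:residual_norm} is exact precisely when the last bracket vanishes, i.e., when the image of $\Pi$ on the computed approximation is captured by the spaces; otherwise the bracket contributes extra (and not mutually orthogonal) components and the formula fails.

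Note also that the condition actually needed is weaker than the invariance you invoke, and this is how the paper's settings rescue the formula: in Section~\ref{sect:low_rank_Krylov} one has $N_iX_kM_i^T=\U_i(\tilde\U_i^TX_k\tilde\Q_i)\Q_i^T$, so including $\U_i$ and $\Q_i$ in the starting blocks forces $\Pi(X_k)=\mathcal{V}_k\mathcal{V}_k^T\Pi(X_k)\mathcal{W}_k\mathcal{W}_k^T$ by a pure range argument, with no invariance of the subspaces under $N_i$; in the Helmholtz example of Section~\ref{sect:LaplaceChi} invariance does hold, but only because $N^2=N$, $NC=0$ and $\vspan((U,NU))=\vspan(U)$ make the $\GK$ spaces stabilize. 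So your derivation becomes correct if you replace the claim ``\eqref{eq:com} delivers $N_i\mathcal{V}_k=\mathcal{V}_kG_{k,i}$'' by the explicit hypothesis $\Pi(X_k)=\mathcal{V}_k\mathcal{V}_k^T\Pi(X_k)\mathcal{W}_k\mathcal{W}_k^T$ (or a range/invariance condition implying it), which is also the hypothesis implicitly underlying the paper's own terse justification; the remaining ingredients of your proof --- the containment of $C_1$, $C_2$ in the spaces, the trace-orthogonality of the two boundary terms, and the cancellation of the orthonormal outer factors in the Frobenius norm --- are all sound.
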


\subsection{Krylov subspace and low-rank commuting matrices} \label{subsec:KrylovAndLowRank}

The starting blocks $\bar C_1$ and $\bar C_2$ in 
Algorithm~\ref{alg:Krylov} need to be selected such that the generated
subspaces have good approximation properties. We now present an appropriate way to select these matrices by using certain approximation properties of the solution to~\eqref{eq:Sylv}, under the low-rank commutation property \eqref{eq:com}.

We first need a technical result which shows that if the commutator of two matrices has low rank, then the corresponding commutator, where one matrix is taken to a given power,  has also low rank. The rank increases with the power of the matrix.

\begin{lemma}\label{lem:comm_main_prop}
  Suppose $A$ and $N$ are matrices such that $[A,N]=U\tilde U^T$. Then,
  \[
 [A^{j},N] 
  = 
  \sum_{k=0}^{j-1} 
  A^k U \tilde U^T A^{j-k-1}. 
  \]  
  \end{lemma}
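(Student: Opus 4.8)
The plan is to prove the identity by induction on the exponent $j$, using the elementary commutator product rule $[XY,Z] = X[Y,Z] + [X,Z]Y$ to strip off one factor of $A$ at each step. For the base case $j=1$ both sides reduce immediately to $[A,N]=U\tilde U^T$, since the sum on the right then contains only the single term $A^0 U \tilde U^T A^0$.

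For the inductive step I would write $A^{j+1}=A\cdot A^j$ and apply the product rule to get $[A^{j+1},N]=A[A^j,N]+[A,N]A^j$. Substituting the inductive hypothesis into the first term and the defining assumption $[A,N]=U\tilde U^T$ into the second produces $A\sum_{k=0}^{j-1}A^k U\tilde U^T A^{j-k-1}+U\tilde U^T A^j$. Absorbing the leading $A$ into the sum and shifting the index turns the first summand into $\sum_{k=1}^{j}A^k U\tilde U^T A^{j-k}$, while the second is precisely the $k=0$ term of that same sum; combining them yields $\sum_{k=0}^{j}A^k U\tilde U^T A^{j-k}$, which is the asserted formula with $j+1$ in place of $j$.

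Alternatively, I could avoid induction altogether with a telescoping argument, which is arguably the most transparent route: each summand on the right equals $A^k[A,N]A^{j-k-1}=A^{k+1}NA^{j-k-1}-A^k N A^{j-k}$, and summing over $k=0,\dots,j-1$ collapses to $A^j N - N A^j = [A^j,N]$. The only point requiring care, in either approach, is the index bookkeeping when re-indexing the shifted sum (respectively identifying the telescoping cancellation); there is no genuine conceptual obstacle here, as the statement follows from the product rule together with the single hypothesis $[A,N]=U\tilde U^T$.
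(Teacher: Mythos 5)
Your inductive argument is correct and is essentially the paper's own proof: the paper also inducts on $j$, using the splitting $[A^{j+1},N]=A^{j}U\tilde U^{T}+(A^{j}N-NA^{j})A$, i.e., it peels a factor of $A$ off on the right where you peel it off on the left via $[A^{j+1},N]=A[A^{j},N]+[A,N]A^{j}$; the two are interchangeable and the index bookkeeping works out identically. Your telescoping alternative, however, is a genuinely different and arguably cleaner route that the paper does not take: writing each summand as
\begin{equation*}
A^{k}U\tilde U^{T}A^{j-k-1}=A^{k}[A,N]A^{j-k-1}=A^{k+1}NA^{j-k-1}-A^{k}NA^{j-k}
\end{equation*}
and summing over $k=0,\dots,j-1$ collapses directly to $A^{j}N-NA^{j}$, with no induction needed. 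What the telescoping buys is a one-line verification that makes the structure of the identity transparent (it is exactly a discrete ``product rule'' summed up); what the induction buys is that it mirrors how the lemma is actually used downstream in the paper (Theorem~\ref{thm:Krylov} manipulates the same kind of sums term by term), so either is a legitimate choice here.
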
 
\begin{proof} 
The proof is by induction. The basis of induction is trivially verified for $j=1$. Assume that the claim is valid for $j$, then the induction step follows by observing that
\begin{align*}
[A^{j+1}, N]
= A^{j+1}N - NA^{j+1}
= A^{j}U\tilde U^T + (A^{j}N-NA^{j})A,
\end{align*}
and applying the induction hypothesis on $A^{j}N-NA^{j}$.
\end{proof}
As pointed out in Remark~\ref{rmk:S1S2}, $C_1$ and $C_2$ are natural starting blocks
for the Sylvester equation. If we apply this result to the sequence
of Sylvester equations in Theorem~\ref{thm:representation_solution}, with $\LLL$ and $\Pi$ defined as 
\eqref{eq:L}-\eqref{eq:Pi}, we obtain
subspaces with a particular structure. 
For example, the approximation $L_0R_0^T$ to $Y_0$ provided by Algorithm~\ref{alg:Krylov} is such that $\vspan(L_0)\subseteq\EK_{k}(A,C_1)$ and $\vspan(R_0)\subseteq\EK_{k}(B,C_2)$. Since $Y_0$ is contained in the right-hand side of the definition of $Y_1$, in order to compute an approximation of $Y_1$, we should consider the subspaces $N_i\,\cdot\nobreak\,\EK_{k}(A,C_1)$ and $M_i\,\cdot\,\EK_{k}(B,C_2)$ for $i=1,\ldots,m$. By using the low-rank commutation property \eqref{eq:com} such subspaces can be characterized by the following result.

\begin{thm} \label{thm:Krylov}
Assume that $A \in \RRnn$ is nonsingular and let $N \in \RRnn$ such that 
$[A,N]=U \tilde U^T$ with $U, \tilde U \in \RR^{n \times s}$. 
Let $C \in \RR^{n \times r}$, then
\[
N\,\cdot\,\EK_{k}(A,C) \subseteq \EK_{k}(A,(NC,U)).
\]
\end{thm}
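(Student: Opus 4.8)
The plan is to exploit linearity and reduce the claim to a statement about the generators of $\EK_k(A,C)$. Writing out the two block Krylov subspaces in the definition $\EK_k(A,C)=\K_k(A,C)+\K_k(A^{-1},A^{-1}C)$, one sees that $\EK_k(A,C)$ is spanned by the columns of $A^jC$ for the integer powers $-(k+1)\le j\le k$. Since $N$ is linear and $\EK_k(A,(NC,U))$ is a subspace, it therefore suffices to show that every column of $NA^jC$ belongs to $\EK_k(A,(NC,U))$ for each such $j$. The guiding idea is that $N$ can be commuted through powers of $A$ at the cost of low-rank correction terms built from $U$ and $\tilde U$, and these corrections are precisely what make the augmented starting block $(NC,U)$ the right choice.

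First I would treat the nonnegative powers $0\le j\le k$. Using Lemma~\ref{lem:comm_main_prop} in the form $NA^j=A^jN-[A^j,N]$ gives
\[
NA^jC=A^j(NC)-\sum_{i=0}^{j-1}A^iU\tilde U^TA^{j-i-1}C.
\]
The first term has columns in $\K_j(A,NC)\subseteq\K_k(A,(NC,U))$. In each summand the matrix $\tilde U^TA^{j-i-1}C$ is constant, so the columns of $A^iU\tilde U^TA^{j-i-1}C$ lie in $\vspan(A^iU)$; since $0\le i\le j-1\le k-1$, these belong to $\K_k(A,(NC,U))$ as well. Hence $NA^jC$ has all its columns in $\K_k(A,(NC,U))\subseteq\EK_k(A,(NC,U))$.

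The main obstacle is the negative powers, since Lemma~\ref{lem:comm_main_prop} is stated for nonnegative exponents only. Here I would first use that $A$ is nonsingular and multiply $[A,N]=U\tilde U^T$ by $A^{-1}$ on both sides to obtain the inverse commutator $[A^{-1},N]=-A^{-1}U\tilde U^TA^{-1}=(-A^{-1}U)(A^{-T}\tilde U)^T$, which is again a rank-$s$ factorization. Applying Lemma~\ref{lem:comm_main_prop} with $A$ replaced by $A^{-1}$ and this new low-rank factor then yields, for $1\le j\le k+1$,
\[
NA^{-j}C=A^{-j}(NC)+\sum_{i=0}^{j-1}A^{-(i+1)}U\tilde U^TA^{-(j-i)}C.
\]
The first term has columns in $\K_k(A^{-1},A^{-1}(NC))$, and in the sum the columns lie in $\vspan(A^{-(i+1)}U)$ with $1\le i+1\le j\le k+1$, hence in $\K_k(A^{-1},A^{-1}U)$; both are contained in $\EK_k(A,(NC,U))$. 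Combining the two cases and invoking linearity completes the argument. The only delicate points are the bookkeeping of the exponent ranges, so that no power exceeds $k$ (respectively $k+1$ on the inverse side), and the verification that the inverse commutator $[A^{-1},N]$ retains the rank-$s$ structure needed to reuse Lemma~\ref{lem:comm_main_prop}.
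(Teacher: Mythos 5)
Your proof is correct and takes essentially the same route as the paper's: commute $N$ through the nonnegative powers of $A$ via Lemma~\ref{lem:comm_main_prop}, and handle the inverse powers by observing that $[A^{-1},N]=-(A^{-1}U)(A^{-T}\tilde U)^T$ is again a rank-$s$ factorization to which the same lemma applies. The differences are only cosmetic: you argue monomial-by-monomial on the spanning set $A^jC$, $-(k+1)\le j\le k$, where the paper works with generators $Np(A)Cw+Nq(A^{-1})Cv$ directly, and you write out the inverse-power bookkeeping that the paper compresses into ``the same procedure.''
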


\begin{proof}
Let $N p(A) C w+N q(A^{-1}) C v$ be a generator of $N \cdot \EK_{k}(A,C)$, where  
$p(x)=\sum_{j=0}^k \alpha_j x^j$. Then, with a direct usage of Lemma~\ref{lem:comm_main_prop}, the vector 
$N p(A) C w$ can be expressed as an element of $\EK_{k}(A,(NC,U))$ in the following way
\begin{align*}
   N p(A) C w 
   &=   N \sum_{j=0}^k \alpha_j A^j C w 
   =	p(A)N Cw - \sum_{j=0}^k \sum_{\ell=0}^{j-1}\alpha_j A^\ell U \left(  \tilde U^T A^{j-1-\ell}  Cw \right).  
  \end{align*}
We can show that $N q(A^{-1}) C v$ belongs to the subspace $\EK_{k}(A,(NC,U))$ with the same procedure and by using that $[A^{-1},N] =  -(A^{-1} U) ( A^{-T} \tilde U)^T$.\end{proof}

In order to ease the notation and improve conciseness of the results that follow, 
we introduce the following multivariate generalization of the Krylov subspace for more matrices 
\begin{align*}
 \GK_d(N_1, \dots, N_m; U) := 
 \sspan 
 \left \{
 p(N_1, \dots, N_m)Uz \middle| 
 \deg(p) \le d, z \in \RR^r
 \right \},
\end{align*}
where $U \in \RR^{n \times r}$ and $p$ is 
a non-commutative multivariate polynomial in the free algebra 
$\RR<x_1, \dots, x_N>$ (in the sense of \cite[Chapter 10]{berstel2011noncommutative}).

\begin{obs}\label{obs:GK}
 Observe that $\GK_d(N_1, \dots, N_m; U)$ 
 is the space generated by the columns of the matrices obtained 
 multiplying (in any order) $s \le d$ matrices $N_i$ and the matrix $U$.
 In particular this space can be equivalently characterized as 
\begin{align*}
 \GK_d(N_1, \dots, N_m; U) = 
 \sspan 
 \left \{
 N_{i_1} \cdots N_{i_s} Uz \middle| 
 1 \le i_j \le m, 
 0 \le s \le d,
 z \in \RR^r
 \right \}.
\end{align*}
This definition generalizes the definition of the standard block Krylov subspace in 
the sense that 
$\GK_d(N; U) = \K_d(N,U)$.
\end{obs}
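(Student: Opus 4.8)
The plan is to prove both claimed characterizations by reducing an arbitrary non-commutative polynomial to its monomials and then using that a span is unchanged under taking linear combinations of its generators. First I would recall the relevant structure of the free algebra $\RR\langle x_1,\dots,x_m\rangle$: it has as an $\RR$-basis the set of \emph{words} (monomials) $x_{i_1}\cdots x_{i_s}$ with $1\le i_j\le m$, and the degree of such a word is precisely its length $s$ (the empty word has degree $0$ and plays the role of the multiplicative identity). Consequently, any polynomial $p$ with $\deg(p)\le d$ is a finite $\RR$-linear combination of words of length at most $d$.

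Next I would use that the substitution $x_i\mapsto N_i$ is an algebra homomorphism, so it sends a word $x_{i_1}\cdots x_{i_s}$ to the matrix product $N_{i_1}\cdots N_{i_s}$, and the empty word to the identity $I$. Writing $p=\sum_w c_w\,w$ as a linear combination of words $w$ of length $s_w\le d$, we obtain $p(N_1,\dots,N_m)\,Uz=\sum_w c_w\,(N_{i_1}\cdots N_{i_{s_w}})\,Uz$. This exhibits every generator of $\GK_d(N_1,\dots,N_m;U)$ as an $\RR$-linear combination of the vectors $N_{i_1}\cdots N_{i_s}Uz$ with $0\le s\le d$, which gives the inclusion $\subseteq$. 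For the reverse inclusion I would observe that each such vector is already a generator of $\GK_d$, since it is obtained by evaluating the single monomial $p=x_{i_1}\cdots x_{i_s}$ of degree $s\le d$; taking spans on both sides then yields the asserted equality.

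Finally, for the scalar case $m=1$ I would note that the only words in the single letter $x_1$ are its powers $x_1^s$, so a non-commutative polynomial of degree $\le d$ collapses to an ordinary polynomial $\sum_{s=0}^d c_s x_1^s\in\RR[x]$ of degree $\le d$; substituting $x_1\mapsto N$ and comparing with the definition of $\K_d(N,U)$ recalled earlier gives $\GK_d(N;U)=\K_d(N,U)$ immediately. The argument is essentially bookkeeping, and the only point requiring care is the precise correspondence between the degree of a non-commutative polynomial and the lengths of its constituent monomials, together with the handling of the constant term (the empty word), which contributes exactly the $s=0$ term $Uz$; once the free-algebra formalism is fixed, both inclusions follow from linearity of the span.
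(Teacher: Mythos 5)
Your proof is correct and coincides with the paper's (implicit) reasoning: the paper states this as an Observation without proof, and the intended justification is exactly the linearity bookkeeping you give --- expanding a non-commutative polynomial of degree at most $d$ into its constituent words of length at most $d$ (with the empty word giving the $s=0$ term $Uz$), and conversely noting that each word is itself a polynomial of degree at most $d$, so the two spans coincide; the identity $\GK_d(N;U)=\K_d(N,U)$ then follows since words in a single letter are just powers. Nothing further is needed.
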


The solution to the generalized Sylvester equation~\eqref{eq:Sylv} can be approximated  by constructing an approximation of $X^{(\ell)}$. In particular, by subsequentially computing low-rank approximations to the Sylvester equations \eqref{eq:Yj}.
In the following theorem we illustrate some properties that this approximation of $X^{(\ell)}$ fulfills. In order to state the theorem we need the result of the application of the extended Krylov method to the (standard) Sylvester equations of the form
\begin{subequations}
\begin{align}
\label{eq:YY0}
  A \Y + \Y B^T	& = C_1 C_2^T,				\\
\label{eq:YYj}
  A \Y + \Y B^T	& = -\sum_{i=1}^m (N_i L_j) (M_i R_j)^T,
\end{align}
\end{subequations}
as described in \cite{Simoncini2007,KrylovSylv}. As already stated in Remark~\ref{rmk:S1S2}, this is identical to applying Algorithm~\ref{alg:Krylov} with $m=0$.

\begin{thm} \label{thm:Space}
Consider the generalized Sylvester equation \eqref{eq:Sylv}, 
with coefficients commuting according to \eqref{eq:com}. 
Let $\tilde Y_0=L_0 R_0^T$ be the result of Algorithm~\ref{alg:Krylov} applied to the (standard) Sylvester equation~\eqref{eq:YY0} with starting blocks $\bar C_1=C_1$ and $\bar C_2=C_2$. 
Moreover, for $j=0, \dots, \ell-1$, let $\tilde Y_{j+1}=L_{j+1} R_{j+1}^T$ be the result of Algorithm~\ref{alg:Krylov} applied to the Sylvester equation~\eqref{eq:YYj} with starting blocks $\bar C_1=(N_1 L_j, \dots, N_m L_j)$ and $\bar C_2=(M_1 R_j, \dots, M_m R_j)$. Let $\tilde X^{(\ell)}$ be the approximation of the truncated Neumann series \eqref{eq:XN} given by 
$$\tilde X^{(\ell)} := \sum_{j=0}^\ell \tilde Y_j.$$ 
Then, there exist matrices $L,R,\hat C_1^{(\ell)},\hat C_2^{(\ell)}$ such that
$\vspan(L) \subseteq \EK_{(\ell+1)d}( A, \hat C_1^{(\ell)} )$ and 
$\vspan(R) \subseteq \EK_{(\ell+1)d}( B, \hat C_2^{(\ell)} )$ and
\begin{align*}
\tilde X^{(\ell)} = L R^T,
\end{align*}
where
\begin{subequations} \label{eq:C1C2N}
\begin{align}
\label{eq:C1N}
\vspan(\hat C_1^{(\ell)}) \subseteq & \ \GK_{\ell}(N_1, \dots, N_m; C_1) + 
\GK_{\ell-1}(N_1, \dots, N_m; U),	\\
\label{eq:C2N}
\vspan(\hat C_2^{(\ell)}) \subseteq &  \ \GK_{\ell}(M_1, \dots, M_m; C_2) + 
\GK_{\ell-1}(M_1, \dots, M_m; Q),	
\end{align}
\end{subequations}
and 
$U := (U_1,\dots,U_m)$, $Q := (Q_1,\dots,Q_m)$.
\end{thm}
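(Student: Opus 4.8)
The plan is to argue by induction on the recursion level $j$, tracking for each factor $L_j$ simultaneously the extended Krylov subspace containing its column space and the $\GK$-structure of the block that starts it; the statements for $R_j$, $B$, $M_i$, $Q_i$ follow verbatim by symmetry and I would not repeat them. First I would record the global bookkeeping. Since $\tilde X^{(\ell)}=\sum_{j=0}^\ell \tilde Y_j=\sum_{j=0}^\ell L_jR_j^T$, stacking factors gives $\tilde X^{(\ell)}=LR^T$ with $L:=(L_0,\dots,L_\ell)$ and $R:=(R_0,\dots,R_\ell)$, so $\vspan(L)=\sum_{j=0}^\ell\vspan(L_j)$. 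Hence it suffices to place each $\vspan(L_j)$ in a common extended Krylov subspace and to control the per-level starting blocks $D_1^{(j)}$; the final block is then the concatenation $\hat C_1^{(\ell)}:=(D_1^{(0)},\dots,D_1^{(\ell)})$, and \eqref{eq:C1N} follows from the nestedness of the $\GK$ spaces in their degree, since $\sum_{j=0}^\ell[\GK_j(N_1,\dots,N_m;C_1)+\GK_{j-1}(N_1,\dots,N_m;U)]=\GK_\ell(N_1,\dots,N_m;C_1)+\GK_{\ell-1}(N_1,\dots,N_m;U)$, with the convention $\GK_{-1}=\{0\}$.

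For the induction I would carry the hypothesis that there is a block $D_1^{(j)}$ with $\vspan(D_1^{(j)})\subseteq\GK_j(N_1,\dots,N_m;C_1)+\GK_{j-1}(N_1,\dots,N_m;U)$ and $\vspan(L_j)\subseteq\EK_{r_j}(A,D_1^{(j)})$ for a degree $r_j$ to be tracked. The base case $j=0$ is Remark~\ref{rmk:S1S2} applied to \eqref{eq:YY0}: K-PIK with starting block $C_1$ gives $\vspan(L_0)\subseteq\EK_d(A,C_1)$, so $D_1^{(0)}=C_1$, $r_0=d$. For the step, Remark~\ref{rmk:S1S2} applied to \eqref{eq:YYj} yields $\vspan(L_{j+1})\subseteq\EK_d(A,\bar C_1^{(j+1)})$ with $\bar C_1^{(j+1)}=(N_1L_j,\dots,N_mL_j)$. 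The engine is Theorem~\ref{thm:Krylov}: using $[A,N_i]=U_i\tilde U_i^T$ and the hypothesis, $\vspan(N_iL_j)\subseteq N_i\cdot\EK_{r_j}(A,D_1^{(j)})\subseteq\EK_{r_j}(A,(N_iD_1^{(j)},U_i))$, so $\vspan(\bar C_1^{(j+1)})\subseteq\EK_{r_j}(A,D_1^{(j+1)})$ with $D_1^{(j+1)}:=(N_1D_1^{(j)},U_1,\dots,N_mD_1^{(j)},U_m)$. The $\GK$-structure is then immediate from Observation~\ref{obs:GK}: multiplication by any $N_i$ raises the degree of a $\GK$ space by one, giving $\vspan(N_iD_1^{(j)})\subseteq\GK_{j+1}(N_1,\dots,N_m;C_1)+\GK_j(N_1,\dots,N_m;U)$, while $\vspan(U_i)\subseteq\GK_0(N_1,\dots,N_m;U)\subseteq\GK_j(N_1,\dots,N_m;U)$, whence $\vspan(D_1^{(j+1)})$ sits in $\GK_{j+1}(N_1,\dots,N_m;C_1)+\GK_j(N_1,\dots,N_m;U)$, as required.

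It remains to collapse the nested containment $\vspan(L_{j+1})\subseteq\EK_d(A,\bar C_1^{(j+1)})$ with $\vspan(\bar C_1^{(j+1)})\subseteq\EK_{r_j}(A,D_1^{(j+1)})$ into a single extended Krylov subspace in $D_1^{(j+1)}$, and I expect this degree bookkeeping to be the main obstacle. The difficulty is that an extended Krylov subspace is the span of Laurent polynomials in $A$ (both positive and negative powers) applied to its starting block, so re-expanding $\EK_d(A,\cdot)$ around a block already living in $\EK_{r_j}(A,D_1^{(j+1)})$ mixes the two exponent ranges. I would prove a nesting lemma by counting admissible exponents, showing that the attainable degree grows by (at most) $d$ per level so that $r_{j+1}=r_j+d$ and hence $r_\ell=(\ell+1)d$; this is the one place where the exact constant is delicate and must be matched to the iteration-counting convention underlying $\EK_d$. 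Feeding this back, $\vspan(L_j)\subseteq\EK_{(j+1)d}(A,D_1^{(j)})\subseteq\EK_{(\ell+1)d}(A,\hat C_1^{(\ell)})$ for every $j\le\ell$, and summing over $j$ gives $\vspan(L)\subseteq\EK_{(\ell+1)d}(A,\hat C_1^{(\ell)})$. The mirror construction on the $B$-side, with $[B,M_i]=Q_i\tilde Q_i^T$ and $Q=(Q_1,\dots,Q_m)$, produces $\hat C_2^{(\ell)}$ together with the bound \eqref{eq:C2N}, which completes the proof.
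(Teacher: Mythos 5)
Your proof follows essentially the same route as the paper's: induction over the Neumann levels, with Remark~\ref{rmk:S1S2} supplying the base case and the per-level extended Krylov containment, Theorem~\ref{thm:Krylov} absorbing the factors $N_i$ into an enlarged starting block, and concatenation $L=(L_0,\dots,L_\ell)$, $\hat C_1^{(\ell)}=(D_1^{(0)},\dots,D_1^{(\ell)})$ at the end --- your blocks $D_1^{(j)}$ are, up to column ordering, exactly the paper's $S_j$. The nesting step you single out as the main obstacle is precisely the step the paper performs silently (it concludes $\vspan(L_{j+1})\subseteq\EK_{(j+2)d}(A,S_{j+1})$ by simply adding the degrees), and your exponent-counting plan does settle it, so this is not a gap relative to the paper's own argument --- though your caution about the exact constant is well founded, since a strict count of the negative powers in the composed Laurent polynomials costs one extra unit per level, an off-by-one that the paper's bookkeeping also glosses over.
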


\begin{proof}
We start proving that for $j=0, \dots, \ell$, there exists a matrix $S_j$ such that $\vspan(L_j) \subseteq \EK_{(j+1)d}(A,S_j)$ and
{ 
\small
\medmuskip=1mu
\thinmuskip=1.3mu
\thickmuskip=2mu
\begin{align} \label{eq:induction} 
\hspace{-0.6cm} \nonumber
&\vspan (S_j) \subseteq	\\
& \sspan
 \left \{
 \left( \prod_{i=1}^{j} N_{j_i} \hspace{-0.085cm} \right) C_1 \hspace{-0.05cm} w  + 
 p (N_1, \dots, N_m) U \hspace{-0.05cm} z
  \middle| w \in \RR^r, z\in\RR^{s},
 1 \le j_i \le m, \deg (p) \le j-1 
  \right \},
\end{align}}where $s=\sum_{i=1}^m s_i$ 
and $s_i$ denotes the number of columns of $U_i$. We prove this claim by induction. 
The basis of induction is trivially verified with $S_0:=C_1$ and using Remark~\ref{rmk:S1S2}. We now assume that the claim is valid for $j$ and we perform the induction step. Remark~\ref{rmk:S1S2} implies that $\vspan( L_{j+1} ) \subseteq \EK_d(A,(N_1 L_j,$ $ \dots, N_m L_j) )$. From Theorem~\ref{thm:Krylov} and the induction hypothesis we have that $\vspan( N_i L_j ) \subseteq \EK_{(j+1)d} (A,(N_i, S_j U_i))$ for any $i=1, \dots, m$. Therefore we have that $ \vspan( L_{j+1} ) \subseteq \EK_{(j+2)d} (A, (N_1 S_j, \dots, N_m S_j, U))$. 
We define $S_{j+1}:=(N S_j, \dots, N_m S_j, U)$ which concludes the induction.

From \eqref{eq:induction} we now obtain the relation
\begin{align*}
\vspan((S_1, \dots, S_j)) \subseteq 
\GK_{j}(N_1, \dots, N_m; C_1) + 
\GK_{j-1}(N_1, \dots, N_m; U),	
\end{align*}
that directly implies \eqref{eq:C1N} by setting $\hat C_1^{(\ell)}:=(S_1, \dots, S_\ell)$.
Equation \eqref{eq:C2N} follows from completely analogous reasoning. 
The final conclusion follows by defining $L:=(L_0,\dots, L_\ell)$ and $R:=(R_0,\dots,R_\ell)$.
\end{proof} 

The main message of the previous theorem can be summarized as follows. 
The low-rank factors of the approximation of $X^{(\ell)}$ \eqref{eq:XN} obtained by solving the Sylvester equations~\eqref{eq:Yj} with K-PIK \cite{Simoncini2007,KrylovSylv} (that it is equivalent to  Algorithm~\ref{alg:Krylov} as discussed in Remark~\ref{rmk:S1S2}), are contained in an extended Krylov subspace with a specific choice of the starting blocks. In particular the starting blocks are selected as $\bar C_1 = \hat C_1^{(\ell)}$, $\bar C_2 = \hat C_2^{(\ell)}$ where $\hat C_1^{(\ell)}$ and $\hat C_2^{(\ell)}$ fulfill \eqref{eq:C1N}-\eqref{eq:C2N}. 
Therefore Algorithm~\ref{alg:Krylov} can be used directly to the generalized Sylvester equation~\eqref{eq:Sylv} with this choice of the starting blocks. It is computationally more attractive to use Algorithm~\ref{alg:Krylov} directly on the generalized Sylvester equation~\eqref{eq:Sylv} if the starting blocks are low-rank matrices.
A practical procedure that generates starting blocks that fulfill~\eqref{eq:C1C2N} consists in selecting $\bar C_1$ and $\bar C_2$ such that their columns are respectively a basis of the subspaces $\GK_{\ell}(N_1, \dots, N_m; C_1)$ $+$ $ \GK_{\ell-1}(N_1, \dots, N_m; U)$ and $\GK_{\ell}(M_1, \dots, M_m; C_2) + \GK_{\ell-1}(M_1, \dots, M_m; Q)$. 
A basis of such spaces can be computed by using  Observation~\ref{obs:GK}. For example a basis of $\GK_{2}(N_1, N_2; U)$ is given by the columns of the matrix 
\begin{align*}
(U, \ N_1 U, \ N_2 U, \ N_1 N_2 U, \ N_2 N_1 U, \ N_1^2 U, \ N_2^2 U) .
\end{align*}
Observe that this approach can take advantage of many different features of the original generalized Sylvester equation~\eqref{eq:Sylv}. In certain cases the dimension of the subspaces $\GK_{\ell}$ is bounded for all the $\ell$. This condition is satisfied, e.g., if the matrix coefficients $N_i$, $M_i$ are nilpotent/idempotent or in general if they have  low degree minimal polynomials. Therefore, it is possible to select the starting blocks such that Algorithm~\ref{alg:Krylov} provides an approximation of $X^{(\ell)}$ for all $\ell$, i.e., the full series~\eqref{eq:Xseries} is approximated. These situations naturally appear in applications, see the numerical example in Section~\ref{sect:LaplaceChi}.

\subsection{Krylov subspace method and low-rank matrices} \label{sect:low_rank_Krylov}
Our numerical method can be improved for the
following special case. We now consider a generalized Sylvester equation~\eqref{eq:Sylv} where $N_i = \U_i \tilde \U_i^T$ and $M_i = \Q_i \tilde \Q_i^T$ are low-rank matrices. Obviously, the commutators $[A,N_i]$ and $[B,M_i]$ also have low rank and the theory and the procedure presented in the previous section cover this case. However, the solution to~\eqref{eq:Sylv} can be further characterized and an efficient (and different) choice of the starting blocks $\bar C_1, \bar C_2$ can be derived. The assumption $\rho(\Lop^{-1}\Pi)<1$ is no longer needed in order to justify the low-rank approximability. This property can be illustrated with a Sherman-Morrison-Woodbury formula as proposed in \cite{Benner:2013:low}.
The following proposition shows that, the generalized Sylvester equation~\eqref{eq:Sylv} can be implicitly written as a Sylvester equation with right-hand side involving the matrices $\U_i$ and $\Q_i$ for $i=1,\dots,m$. By using Remark~\ref{rmk:S1S2} this leads to the natural choice of the starting blocks  $\bar C_1=(C_1, \U_1, \dots, \U_m)$ and 
$\bar C_2=(C_2, \Q_1, \dots, \Q_m)$.

\begin{proposition}\label{prop:Krylov_low_rank}
Consider the generalized Sylvester equation \eqref{eq:Sylv}, 
assume that 
$N_i=\U_i \tilde \U_i^T$ and 
$M_i = \Q_i \tilde \Q_i^T$ such that 
$\U_i, \tilde \U_i \in \RR^{n \times s_i}$ and 
$\Q_i, \tilde \Q_i \in \RR^{n \times t_i}$. 
Then there exist $\alpha_i \in \RR$ for $i=1,\dots,m$ such that 
\begin{align*}
 A X + X B^T = C_1 C_2^T - \sum_{i=1}^m \alpha_i \U_i \Q_i^T  
\end{align*}

\end{proposition}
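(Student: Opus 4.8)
The plan is to isolate the Sylvester part of \eqref{eq:Sylv} and absorb the operator $\Pi$ into the right-hand side, exploiting the rank-one structure of each $N_i$ and $M_i$. First I would move the term $\Pi(X)=\sum_{i=1}^m N_iXM_i^T$ to the right-hand side of \eqref{eq:Sylv}, obtaining
\begin{align*}
 AX+XB^T = C_1C_2^T - \sum_{i=1}^m N_iXM_i^T.
\end{align*}
This is already a standard Sylvester equation in $X$ whose right-hand side happens to involve the unknown $X$; the content of the proposition is to show that this right-hand side has the claimed factored structure.

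Next I would substitute the factorizations $N_i=\U_i\tilde \U_i^T$ and $M_i=\Q_i\tilde \Q_i^T$, taking care to transpose the second one, so that
\begin{align*}
 N_iXM_i^T = \U_i\tilde \U_i^T X \tilde \Q_i\Q_i^T = \U_i\bigl(\tilde \U_i^T X \tilde \Q_i\bigr)\Q_i^T .
\end{align*}
The key observation is that, in the rank-one case $s_i=t_i=1$ on which the scalar form of the statement hinges, the middle factor $\tilde \U_i^T X \tilde \Q_i$ is a scalar. Defining $\alpha_i:=\tilde \U_i^T X \tilde \Q_i\in\RR$ then gives $N_iXM_i^T=\alpha_i\U_i\Q_i^T$, and substituting back yields exactly $AX+XB^T=C_1C_2^T-\sum_{i=1}^m\alpha_i\U_i\Q_i^T$, which is the claim. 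The existence of the $\alpha_i$ is immediate from this computation.

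The step that demands the most care, and the only real conceptual subtlety, is the interpretation of the $\alpha_i$: these scalars depend on the unknown solution $X$ through the bilinear forms $\tilde \U_i^T X \tilde \Q_i$, so the identity is an \emph{implicit} representation rather than an explicit, computable formula. This is acceptable here because the proposition is used only to read off the column and row spaces of the effective right-hand side: whatever the actual values of the $\alpha_i$, the matrix $C_1C_2^T-\sum_{i=1}^m\alpha_i\U_i\Q_i^T$ has column space contained in $\vspan((C_1,\U_1,\dots,\U_m))$ and row space contained in $\vspan((C_2,\Q_1,\dots,\Q_m))$. Together with Remark~\ref{rmk:S1S2}, this justifies the starting blocks $\bar C_1=(C_1,\U_1,\dots,\U_m)$ and $\bar C_2=(C_2,\Q_1,\dots,\Q_m)$. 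For a genuinely low-rank (rather than rank-one) $N_i,M_i$ the same manipulation applies verbatim with the scalar $\alpha_i$ replaced by the small matrix $\tilde \U_i^T X \tilde \Q_i$, leaving these span conclusions unchanged.
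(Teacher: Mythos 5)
Your proof is correct, but it follows a different route from the paper's. The paper proves the proposition in one line, by citing a Sherman--Morrison--Woodbury-type theorem for Sylvester operators with low-rank operator perturbations (\cite[Theorem 4.1]{ringh2016sylvester}, applied with $E_i := \U_i\Q_i^T$) --- the SMW framework is exactly what the text preceding the proposition announces. Your argument instead carries out the elementary computation that underlies that result: moving $\Pi(X)$ to the right-hand side and writing $N_iXM_i^T=\U_i\bigl(\tilde\U_i^TX\tilde\Q_i\bigr)\Q_i^T$, so that the middle factor becomes the coefficient $\alpha_i$. The citation buys something your argument deliberately leaves aside: in the SMW framework the coefficients are not merely shown to exist (implicitly, through the unknown $X$) but can be computed explicitly, by solving a small linear system built from $\Linv$ applied to the matrices $\U_i\Q_i^T$, without knowing $X$ in advance. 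Your argument, in exchange, is self-contained and is more careful on a point the paper glosses over: with $\alpha_i\in\RR$ a scalar, the identity as written is only guaranteed when $s_i=t_i=1$, since for genuinely low-rank $N_i,M_i$ the middle factor $\tilde\U_i^TX\tilde\Q_i$ is a small matrix that need not be a scalar multiple of the identity. You correctly note that this does not affect the only consequence the paper draws from the proposition --- that the effective right-hand side has column space contained in $\vspan((C_1,\U_1,\dots,\U_m))$ and row space contained in $\vspan((C_2,\Q_1,\dots,\Q_m))$, which is what justifies the starting blocks $\bar C_1=(C_1,\U_1,\dots,\U_m)$ and $\bar C_2=(C_2,\Q_1,\dots,\Q_m)$.
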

\begin{proof}
The proof follows by \cite[Theorem 4.1]{ringh2016sylvester} setting $E_i:= \U_i \Q_i^T$.
\end{proof} 

\subsection{Solving the projected problem}\label{sec:small}
In order to apply Algorithm~\ref{alg:Krylov} we need
to solve the projected problem in Step~\ref{step:proj_problem}.
The projected problem has to be solved in every
iteration and efficiency is therefore required in practice.
For completeness we now derive a procedure to solve the projected problem based 
on the Neumann series expansion derived in Section~\ref{sect:Neumann},
although this is certainly not the only option. 
The derivation is based on the following observations. 
The projected problem is a small generalized Sylvester equation~\eqref{eq:Sylv}, and
 the computation of $X^{(\ell)}$ in \eqref{eq:XN} requires solving 
$\ell+1$ Sylvester equations~\eqref{eq:Yj}. Since the Sylvester equations \eqref{eq:Yj} are defined by the same coefficients, 
they can be simultaneously reduced to triangular form 

\begin{subequations}\label{eq:Silvester_triang}
\begin{align}
&U_A\widetilde Y_{0}+\widetilde Y_0U_B^T=\widetilde C_1 \widetilde C_2^T, \\
&U_A\widetilde Y_{j+1}+\widetilde Y_{j+1}U_B^T=
-\sum_{i=1}^m
\widetilde N_i \widetilde Y_{j}
\widetilde M_i^T ,
&\quad j=0,\ldots,\ell-1,
\end{align}
\end{subequations}
where we have defined 
\begin{align} \label{eq:triang_Sylv_coeff}
&\widetilde C_1 := Q_A^T C_1, 
& \widetilde C_2:= Q_B^T C_2,	
&&\widetilde N_i:=Q_A^TN_iQ_A, 
&& \widetilde M_i^T:=Q_B^TM_i^TQ_B,
\end{align}
and $A=Q_AU_AQ_A^T$ and $B=Q_BU_BQ_B^T$ denote the Schur decompositions.
The Sylvester equations~\eqref{eq:Silvester_triang} with triangular coefficients can be efficiently solved with backward substitution as in the Bartels-Stewart algorithm~\cite{Bartels1972} and it holds that
$X^{(\ell)}=Q_A \left(\sum_{j=0}^\ell \widetilde Y_j\right)Q_B^T$. 
The Frobenius norm of the residual  
$\R^{(\ell)}:=AX^{(\ell)}+X^{(\ell)} B^T+\sum_{i=1}^mN_iX^{(\ell)}M_i^T-C_1C_2^T$
can be computed without explicitly constructing $X^{(\ell)}$
as follows
 \begin{equation}\label{exact_residualnorm}
  \|\R^{(\ell)}\|_F=
  \left\|\sum_{i=1}^m \widetilde N_i \widetilde Y_{\ell} \widetilde M_i^T\right\|_F.
 \end{equation}
The previous relation follows by simply using the properties of the Frobenius norm (invariance under orthogonal transformations) and the relations~\eqref{eq:Silvester_triang}. 

In conclusion, the following iterative procedure can be used to approximate the solution to \eqref{eq:Sylv}: the matrices 
\eqref{eq:triang_Sylv_coeff} are precomputed, then the Sylvester equations in triangular form~\eqref{eq:Silvester_triang} are solved until the residual of the Neumann series \eqref{exact_residualnorm}
is sufficiently small. The approximation $X^{(\ell)}$ is not computed during the iteration,
but only constructed after the iteration has completed.
The procedure is summarized in Algorithm~\ref{alg:small_size_problems}.

\begin{algorithm}
\label{alg:small_size_problems}
\small
\caption{
Neumann series approach for \eqref{eq:Sylv}.}
\SetKwInOut{Input}{input}\SetKwInOut{Output}{output}
\Input{Matrix coefficients: $A,B,N_1\dots,N_m,M_1,\dots,M_m,C_1,C_2$}
\Output{Truncated Neumann series $X^{(\ell)}$}
\BlankLine
\nl Compute the Schur decompositions $A=Q_AU_AQ_A^T,$ $B=Q_BU_BQ_B^T$\\
\nl Compute $\widetilde C_1$, $\widetilde C_2$, $\widetilde N_i$ $\widetilde M_i$ for all $i=1,\ldots,m$
according to \eqref{eq:triang_Sylv_coeff} \\
\nl Solve $U_A\widetilde Y_{0}+\widetilde Y_0U_B^T=\widetilde C_1 \widetilde C_2^T$ 
and set $\widetilde X =\widetilde Y_{0}$\\
\For{$j = 0,1,\dots $ till convergence}{
\nl Solve $ U_A\widetilde Y_{j+1}+\widetilde Y_{j+1}U_B^T=-\sum_{i=1}^m\widetilde N_i\widetilde Y_{j}\widetilde M_i^T$ and set $\widetilde X =\widetilde X + \widetilde Y_{j+1}$\\
  \nl Compute $\|\R^{(j+1)}\|_F=\|\sum_{i=1}^m\widetilde N_i \widetilde Y_{j+1} \widetilde M_i^T\|_F$\\
\If{$\| \R^{(j+1)}\|_F \le \tol$}{
\nl Set $\ell = j+1$ \\
\nl Break
} 
}
  \nl Return $X^{(\ell)}=Q_A \widetilde X Q_B^T$\\
\end{algorithm}

\section{Numerical examples}\label{sec:num}
We now illustrate our approach with several examples.
In the first two examples, we compare our approach with two different methods for generalized Lyapunov equations:
BilADI \cite{Benner:2013:low} and GLEK \cite{Shank2016}. As expected,
the results are generally in favor of our approach, 
since the other methods are less specialized to the specific structure,
although they have a wider applicable problem domain.
Two variants of BilADI are considered.
In the first variant we select the Wachspress shifts, see e.g., \cite{Wachspress2013}, computed with the software available on Saak's web page\footnote{https://www2.mpi-magdeburg.mpg.de/mpcsc/mitarbeiter/saak/Software/adipars.php}.
In the second variant $\mathcal{H}_2$-optimal shifts \cite{Benner2012} are used. 
The GLEK code is available at the web page of  Simoncini\footnote{http://www.dm.unibo.it/\textasciitilde simoncin/software.html}.
This algorithm requires  fine-tune of several thresholds. We selected {\tt tol\_inexact}$=10^{-2}$ while the default setting is used for all the other thresholds.
The implementation of our approach is based on the modification of K-PIK \cite{Simoncini2007,KrylovSylv} for generalized Sylvester equation as described in Algorithm~\ref{alg:Krylov}. The projected problems,
computed in Step~\ref{step:proj_problem}, are solved with the procedure described in the Section~\ref{sec:small}. A MATLAB implementation of Algorithm~\ref{alg:Krylov} is available online\footnote{http://www.dm.unibo.it/\textasciitilde davide.palitta3}.

In all the methods that we test, the stopping criterion is based on the relative residual norm and the algorithms are stopped when it reaches $\mathtt{tol}=10^{-6}$.
We compare: number of iterations, memory requirements, rank of the computed approximation, number of linear solves (involving the matrices $A$ and $B$ eventually shifted) and total execution CPU-times. 

As memory requirement (denoted Mem. in the following tables) we consider the number of vectors of length $n$ stored during the solution process. In particular, for Algorithm~\ref{alg:Krylov} it consists of the dimension of the approximation space. In GLEK, a sequence of extended Krylov subspaces is generated and the memory requirement corresponds to the dimension of the largest space in the sequence. For the bilinear ADI approach the memory requirement consists of the number of columns of the low-rank factor of the solution. For GLEK, we just report the number of outer iterations. The CPU–times 
reported for BilADI do no take into account the time for the shift computations.
All results were obtained with MATLAB R2015a on a computer with two 2~GHz processors and 128~GB of RAM. 

\subsection{A multiple input multiple output system (MIMO)}
The time invariant multi-input and multi-output (MIMO) bilinear system described in \cite[Example 2]{Lin2009} yields the following generalized Lyapunov equation
\begin{equation}\label{eq.MIMO}
 AX+XA^T+\gamma^2\sum_{i=1}^2N_iXN_i^T=CC^T,
\end{equation}
where $\gamma\in\mathbb{R}$, $\gamma>0$, 
$A=\mbox{tridiag}(2,-5,2)$, $N_1=\mbox{tridiag}(3,0,-3)$ and $N_2=-N_1+I$.
We consider $C\in\mathbb{R}^{n\times 2}$ being a normalized random matrix. In the context of bilinear systems, the solution to \eqref{eq.MIMO}, referred to as \emph{Gramian}, is used for computing energy estimates of the reachability of the states. The number $\gamma$ is a scaling parameter selected in order to ensure the solvability of the problem~\eqref{eq.MIMO} and the positive definiteness of the solution, namely $\rho(\mathcal{L}^{-1}\Pi)<1$. This parameter corresponds to rescaling  the input of the underlying problem with a possible reduction in the region where energy estimates hold. Therefore, it is preferable not to employ very small values of $\gamma$. See \cite{DammBenner} for detailed discussions. 

For this problem the commutators have low rank, more precisely $[A,N_1]=-[A,N_2] = U \tilde U^T$, with $U= 2\sqrt{3} (e_1,e_n)$ and $\tilde U= 2\sqrt{3} (e_1,-e_n)$. As proposed in Section~\ref{subsec:KrylovAndLowRank} we use Algorithm~\ref{alg:Krylov} with starting blocks $\bar C_1= \bar C_2=(C,N_1C,U)$ since $\vspan (C_1^{(1)}) =\vspan((C,N_1C,N_2C,U))= \vspan \left(C,N_1C,U \right)$. Table~\ref{Ex.2_Tab.1} illustrates the performances of our approach and the other low-rank methods, GLEK and the BilADI, as $\gamma$ varies.
\begin{table}[!ht]
\centering
{\small
 \begin{tabular}{r|r|r|r|r|r|r}
  & $\gamma$ & Its. & Mem.
  & rank($X$) & Lin. solves & CPU time \\
  \hline 
  \hline
  BilADI (4 Wach.) & 1/6 & 10 & 55 & 55 & 320 & 51.26 \\
  BilADI (8 $\mathcal{H}_2$-opt.) & 1/6 & 10 & 55 & 55 & 320 &  51.54\\
  GLEK & 1/6 & 9& 151 & 34 & 644 & 14.17 \\
  Algorithm~\ref{alg:Krylov}& 1/6 & 6 & 72 & 60 & 36  & 3.77\\
  \hline
  \hline
  BilADI (4 Wach.) & 1/5 & 14 & 71 & 71 & 588 & 55.15 \\
  BilADI (8 $\mathcal{H}_2$-opt.) & 1/5 & 14 & 69 & 69 & 586 & 54.31  \\
  GLEK & 1/5 & 12 & 173 & 39 & 1016& 22.06 \\
  Algorithm~\ref{alg:Krylov}& 1/5 & 6 & 72 & 61 & 36 & 4.23\\
  \hline
  \hline
  BilADI (4 Wach.) & 1/4 & 24 & 89 & 89 & 1454 & 67.61 \\
  BilADI (8 $\mathcal{H}_2$-opt.) & 1/4 & 23 & 89 & 89 & 1371& 66.83  \\
  GLEK & 1/4 & 21 & 218 & 50 & 2348& 51.49 \\
  Algorithm~\ref{alg:Krylov} & 1/4 & 8 & 96 & 81 & 48 & 6.72 \\
 \end{tabular}}
 \caption{MIMO example. Comparison of low-rank methods for $n=50000$.
 }\label{Ex.2_Tab.1}
\end{table}
We notice that, the number of linear solves that our projection method requires is always
much less than for the other methods.
Moreover, it seems that moderate variations of $\gamma$, that correspond to variations of $\rho(\mathcal{L}^{-1}\Pi)$, have a smaller influence on the number of iterations in our method compared to the other algorithms.

\subsection{A low-rank problem}\label{sec:low-rank}
We now consider the following generalized Lyapunov equation
\begin{equation}\label{eq.ex.1}
AX+XA^T+uv^TXvu^T=CC^T,
\end{equation}
where $A=n^2\mbox{tridiag}(1,-2,1)$ 
and $u,v,C\in\mathbb{R}^n$ are random vectors
with unit norm. 
We use Algorithm~\ref{alg:Krylov},
and as proposed in Section~\ref{sect:low_rank_Krylov}, we select $\bar C_1=\bar C_2=(C, \ u)$ as starting blocks. In Table~\ref{Ex.1_Tab.1} we report the results of the comparison to the other methods.
\begin{table}[!ht]
\centering
{\small
 \begin{tabular}{r|r|r|r|r|r|r}
  & $n$ & Its. & Mem. & rank($X$) & Lin. solves & CPU time \\
  \hline 
  \hline
  BilADI (4 Wach.) & 10000 &  60    &57 & 57 & 2462 & 4.25\\
  BilADI (8 $\mathcal{H}_2$-opt.) & 10000 & 42   &55 & 55 & 1420& 2.54 \\
  GLEK & 10000 & 4  &240 & 28 & 310& 3.10\\
  Algorithm~\ref{alg:Krylov} & 10000 & 46  &184 & 49 & 92 &1.87 \\
  \hline
  \hline
  BilADI (4 Wach.) & 50000 &  327    &61 & 61 & 18673 & 315.56 \\
  BilADI (8 $\mathcal{H}_2$-opt.) & 50000 & 96   &61 & 61 & 4580& 81.47 \\
  GLEK & 50000 & 4  &454 & 28 & 565& 24.78\\
  Algorithm~\ref{alg:Krylov} & 50000 & 78  &312 & 47 & 156 & 14.09 \\
  \hline
  \hline
  BilADI (4 Wach.) & 100000 &  -    &- & - & - & - \\
  BilADI (8 $\mathcal{H}_2$-opt.) & 100000 & 84   &65 & 65 & 4058& 174.04  \\
  GLEK & 100000 & 4  &457 & 29 & 631& 66.77\\
  Algorithm~\ref{alg:Krylov} & 100000 & 97  &388 & 44 & 194 & 37.00 \\
 \end{tabular}}
 \caption{Low-rank example. Comparison of low-rank methods varying $n$.
 }\label{Ex.1_Tab.1}
\end{table}
We notice that our approach requires the lowest number of linear solves. The ADI approaches demand the lowest storage because of the column compression strategy performed at each iteration. However, due to the large number of linear solves, these methods are slower compared to our approach. For large-scale problems the BilADI method with 4 Wachspress shifts does not converge in $500$ iterations. GLEK provides the solution with the smallest rank. 
If we replace the matrix $A$ with $A/n^2$ in equation \eqref{eq.ex.1}, neither BilADI nor GLEK converge since the Lyapunov operator is no longer dominant, i.e., $\rho(\mathcal{L}^{-1}\Pi) > 1$. However, our algorithm still converges and, for $n=10000$, it provides a solution $X$ in $46$ iterations with $\rank(X)=184$. In this case, the projected problems are solved by using the method presented in \cite[Section 3]{DammDirectADI} since the approach described in the Section~\ref{sec:small} cannot be used.

\subsection{Inhomogeneous Helmholtz equation} \label{sect:LaplaceChi}
In the last example, we analyse the complexity of Algorithm~\ref{alg:Krylov} for solving a large-scale generalized Sylvester equation stemming from a finite difference discretization of a PDE. More precisely, we consider the following inhomogeneous Helmholtz equation
\begin{equation}\label{Ex.3pde}
 \left\{\begin{array}{l}
         -\Delta u(x,y)+\kappa(x,y)u(x,y)=f(x,y), \quad (x,y) \in [0,1] \times \RR,\\
 u(x,0)= u(x,1)=0,\\
 u(x,y+1)=u(x,y).
        \end{array}\right.
\end{equation}
The boundary conditions are periodic in the $y$-direction and homogeneous-Dirichlet in the $x$-direction. The wavenumber 
$\kappa(x,y)$ and the forcing term $f(x,y)$ are 1-periodic functions in the $y$-direction. In particular they are respectively the periodic extensions of the scaled indicator functions $\chi_{[0,1/2]^2}$ and $100 \chi_{[1/4,1/2]^2}$.
The discretization of equation \eqref{Ex.3pde} with the finite difference method, using $n$ nodes multiple of $4$, leads to 
the following generalized Sylvester equation
\begin{equation}\label{Ex3.Lyap}
 AX+XB^T+NXN^T=CC^T,
\end{equation}
where
$B=-\mbox{tridiag}(1,-2,1)/h^2$,
$h=1/(n-1)$ is the mesh-size, $A=B-(e_1, e_n) (e_n, e_1)^T/h^2$, and
$$N=\begin{pmatrix}
O_{n/2} & O_{n/2} \\
O_{n/2} & I_{n/2}\\
   \end{pmatrix}\in\mathbb{R}^{n\times n},\;
  C=(c_1,\ldots,c_n)^T,\,
  c_i=
  \begin{cases}
  10, \;\mbox{if }i\in[n/4,n/2],\\
  0, \;\mbox{otherwise.}
  \end{cases}
$$
A direct computation shows that
$[A,N] =U \tilde U^T$
and 
$[B,N] =Q \tilde Q^T$
where
\begin{align*}
& U=n (e_{n/2+1}, e_{n/2}, e_1, e_n), 
&& \tilde U=n ( e_{n/2}, -e_{n/2+1},-e_n,e_1),	\\
& Q=n (e_{n/2+1}, e_{n/2}), 
&& \tilde Q=n(e_{n/2}, -e_{n/2+1}). 
\end{align*}
Algorithm~\ref{alg:Krylov} is not applicable to equation~\eqref{Ex3.Lyap} since the matrix $A$ is singular. 
However, in our approach it is possible to shift the Sylvester operator. In particular we can rewrite equation~\eqref{Ex3.Lyap} as 
\begin{align*}
 (A+I)X+XB^T+NXN^T-X=CC^T .
\end{align*}
It is now possible to apply Algorithm~\ref{alg:Krylov} since $A+I$ is nonsingular. 
For this problem it holds $N^2=N$ and then $\GK_{\ell}(N, I; C) = \vspan((C, NC))$ for all $\ell \geq 1$.
We now note that $[A+I,N]=[A,N]$, and that $NC=0$ and 
$\vspan((U,NU))=\vspan(U)$. Hence, according to Theorem~\ref{thm:Krylov} we select $\bar C_1  = (C, U)$ and $\bar C_2 = (C, Q)$ as starting blocks. Notice that, with this choice, Algorithm~\ref{alg:Krylov} provides an approximation of $X^{(\ell)}$ for every $\ell \geq 0$.
We fix the number of iterations $d=30$ in Algorithm~\ref{alg:Krylov}, and we vary the problem size $n$. In Figure~\ref{Ex3Fig1} we report the percentages of the overall execution time devoted to the orthogonalization procedure (Steps~\ref{step:genKa}-\ref{step:genKb}), to the solution of the inner problems (Step~\ref{step:proj_problem}) and to the remaining steps of the algorithm. We can see that for very large problems, most of the computational effort is dedicated to the orthogonalization procedure. See Figure~\ref{fig:conv} for an illustration of the converge history for the problem of size $n=10000$.

\begin{figure}%
    \centering
    \subfloat[Residual norm history for problem size $n=10000$.]{{
	\hspace{-1cm}
	\includegraphics{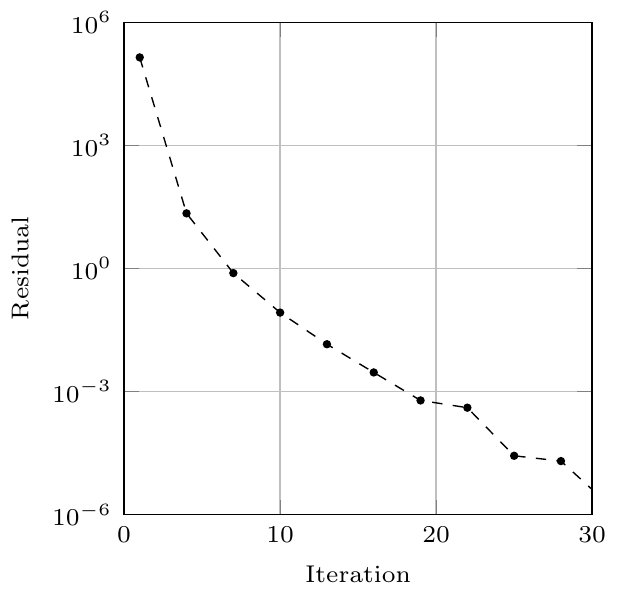} \label{fig:conv}
    }} \hspace{0.3cm}
    \subfloat[CPU time (percent) of the main parts of Algorithm~\ref{alg:Krylov} with $d~=~30$.]{{
 	\includegraphics{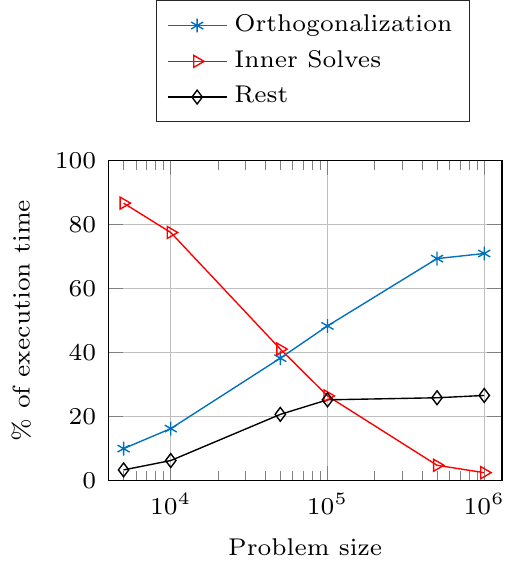}
 	\label{Ex3Fig1}
    }}%
    \caption{Simulations for the Inhomogeneous Helmholtz equation.}%
    \label{fig:example}%
\end{figure} 
 
\section{Conclusions and outlook}\label{sec:conc}
The method that we have proposed for solving~\eqref{eq:Sylv} is directly based on the low-rank commutation feature of the matrix coefficients \eqref{eq:com}. We have applied and adapted our procedure to problems in control theory and discretization of PDEs that naturally present this property. The structured matrices that present this feature are already analysed in literature although, to our knowledge, this was never exploited in the setting of Krylov-like methods for matrix equations. Low-rank commuting matrices are usually studied with the \emph{displacement operators}. More precisely, for a given matrix $Z$, the displacement operator is defined as $F(A):=AZ-ZA$. 
For many specific choices of the matrix $Z$, e.g., Jordan block, circulant, etc., it is possible to characterize the displacement operator and describe the matrices that are low-rank commuting with $Z$. See, e.g., \cite{kailath1995displacement,Beckermann2016}, \cite[Chap. 2, Sec. 11]{bini2012polynomial} and references therein. The theory concerning the displacement operator may potentially be used to classify the problems that can be solved with our approach.

The approach we have pursued in this paper is based on the extended Krylov subspace method. However, it seems to be possible to extend this to the rational Krylov subspace method \cite{Druskin.Simoncini.11} since, the commutator $[A,N]$ is invariant under translations of the matrix $A$. Further research is needed to characterize the spaces and study efficient shift-selection strategies.

In each iteration of Algorithm~\ref{alg:Krylov} the residual can be computed without explicitly constructing the current approximation of the solution but only using the solution of the projected problem. It may be possible to compute the residual norm even without explicitly solving the projected problems as proposed in \cite{Palitta2016a} for Lyapunov and Sylvester equations with symmetric matrix coefficients.

In conclusion, we wish to point out that the low-rank approximability characterization may be of use outside of the scope of projection methods. For instance, the Riemannian optimization methods are designed to compute the best rank $k$ approximation (in the sense of, e.g., \cite{kressner2016preconditioned,vandereycken2010riemannian}) to the solution of the matrix equation. This approach is effective only if $k$ is small, i.e., the solution is approximable by a low-rank matrix, for which we have provided sufficient conditions.

\section*{Acknowledgment}
We wish to thank Tobias Breiten (Graz University) for kindly providing the code 
which helped us to implement BilADI \cite{Benner:2013:low} used in Section~\ref{sec:num}. We also thank Stephen D. Shank 
(Temple University) for providing us with the GLEK code before its on-line publication.

This research commenced during a visit of the third author to the KTH Royal Institute of Technology.
The warm hospitality received is greatly appreciated. The work of the third author is partially supported 
by INdAM-GNCS under the 2017 Project ``Metodi numerici avanzati per equazioni e funzioni di matrici con struttura''. 
The other authors gratefully acknowledge the support of the Swedish Research Council under Grant No. 621-2013-4640.

\bibliographystyle{elsart-num-sort}
\bibliography{manuscript_bib}

\begin{thebibliography}{10}
\expandafter\ifx\csname url\endcsname\relax
  \def\url#1{\texttt{#1}}\fi
\expandafter\ifx\csname urlprefix\endcsname\relax\def\urlprefix{URL }\fi

\bibitem{baker2015fast}
J.~Baker, M.~Embree, J.~Sabino, Fast singular value decay for {Lyapunov}
  solutions with nonnormal coefficients, SIAM J. Matrix Anal. Appl. 36~(2)
  (2015) 656--668.

\bibitem{ballani2013projection}
J.~Ballani, L.~Grasedyck, A projection method to solve linear systems in tensor
  format, Numer. Linear Algebra Appl. 20~(1) (2013) 27--43.

\bibitem{Bartels1972}
R.~H. Bartels, G.~W. Stewart, Algorithm 432: {S}olution of the {M}atrix
  {E}quation {$AX+XB=C$}, Comm. {ACM} 15 (1972) 820--826.

\bibitem{Baur2008}
U.~Baur, Low rank solution of data-sparse {S}ylvester equations, Numer. Linear
  Algebra Appl. 15~(9) (2008) 837--851.

\bibitem{Baur2006}
U.~Baur, P.~Benner, Factorized solution of {L}yapunov equations based on
  hierarchical matrix arithmetic, Computing 78~(3) (2006) 211--234.

\bibitem{Beckermann2016}
B.~Beckermann, A.~Townsend, On the singular values of matrices with
  displacement structure, Tech. rep., arXiv preprint arXiv:1609.09494,
  submitted (2016).

\bibitem{Benner2012}
P.~Benner, T.~Breiten, Interpolation-based {$\mathcal{H}_2$}-model reduction of
  bilinear control systems, SIAM J. Matrix Anal. Appl. 33~(3) (2012) 859--885.

\bibitem{Benner:2013:low}
P.~Benner, T.~Breiten, Low rank methods for a class of generalized {L}yapunov
  equations and related issues, Numer. Math. 124~(3) (2013) 441--470.

\bibitem{DammBenner}
P.~Benner, T.~Damm, {Lyapunov} equations, energy functionals, and model order
  reduction of bilinear and stochastic systems, SIAM J. Control Optim. 49~(2)
  (2011) 686--711.

\bibitem{Benner2014}
P.~Benner, P.~K\"urschner, Computing real low-rank solutions of {S}ylvester
  equations by the factored {ADI} method, Comput. Math. Appl. 67~(9) (2014)
  1656--1672.

\bibitem{Benner2009}
P.~Benner, R.~C. Li, N.~Truhar, On the {ADI} method for {S}ylvester equations,
  J. Comput. Appl. Math. 233~(4) (2009) 1035--1045.

\bibitem{berstel2011noncommutative}
J.~Berstel, C.~Reutenauer, Noncommutative rational series with applications,
  vol. 137, Cambridge University Press, 2011.

\bibitem{bini2012polynomial}
D.~A. Bini, V.~Pan, Polynomial and matrix computations: fundamental algorithms,
  Springer Science \& Business Media, 2012.

\bibitem{bouhamidi2008note}
A.~Bouhamidi, K.~Jbilou, A note on the numerical approximate solutions for
  generalized {Sylvester} matrix equations with applications, Appl. Math.
  Comput. 206~(2) (2008) 687--694.

\bibitem{KrylovSylv}
T.~Breiten, V.~Simoncini, M.~Stoll, {Low-rank} solvers for fractional
  differential equations, Electron. Trans. Numer. Anal. 45 (2016) 107--132.

\bibitem{DammDirectADI}
T.~Damm, Direct methods and {ADI}-preconditioned {Krylov} subspace methods for
  generalized {Lyapunov} equations, Numer. Linear Algebra Appl. 15~(9) (2008)
  853--871.

\bibitem{Druskin.Simoncini.11}
V.~Druskin, V.~Simoncini, Adaptive rational {K}rylov subspaces for large-scale
  dynamical systems, Systems Control Lett. 60~(8) (2011) 546--560.

\bibitem{Einstein2011}
E.~Einstein, C.~R. Johnson, B.~Lins, I.~Spitkovsky, The ratio field of values,
  Linear Algebra Appl. 434~(4) (2011) 1119--1136.

\bibitem{Grasedyck:2004:Existence}
L.~Grasedyck, Existence and computation of low {Kronecker}-rank approximations
  for large linear systems of tensor product structure, Computing 72~(3) (2004)
  247--265.

\bibitem{grasedyck2004existenceH}
L.~Grasedyck, Existence of a low rank or {$\mathcal{H}$}-matrix approximant to
  the solution of a {Sylvester} equation, Numer. Linear Algebra Appl. 11~(4)
  (2004) 371--389.

\bibitem{Gutknecht:2007:Block}
M.~H. Gutknecht, Block {Krylov} space methods for linear systems with multiple
  right-hand sides: An introduction, in: Modern Mathematical Models, Methods
  and Algorithms for Real World Systems, Anamaya, 2007, pp. 420--447.

\bibitem{jaimoukha1994krylov}
I.~M. Jaimoukha, E.~M. Kasenally, Krylov subspace methods for solving large
  {Lyapunov} equations, SIAM J. Numer. Anal. 31~(1) (1994) 227--251.

\bibitem{kailath1995displacement}
T.~Kailath, A.~H. Sayed, Displacement structure: theory and applications, SIAM
  Rev. 37~(3) (1995) 297--386.

\bibitem{Kato:1995:Pertubation}
T.~Kato, Perturbation Theory for Linear Operators, Springer-Verlag, Berlin,
  1995.

\bibitem{kressner2015truncated}
D.~Kressner, P.~Sirkovi{\'c}, Truncated low-rank methods for solving general
  linear matrix equations, Numer. Linear Algebra Appl. 22~(3) (2015) 564--583.

\bibitem{kressner2016preconditioned}
D.~Kressner, M.~Steinlechner, B.~Vandereycken, Preconditioned low--rank
  {Riemannian} optimization for linear systems with tensor product structure,
  SIAM J. Sci. Comput. 38~(4) (2016) A2018--A2044.

\bibitem{kressner2010krylov}
D.~Kressner, C.~Tobler, {Krylov} subspace methods for linear systems with
  tensor product structure, SIAM J. Matrix Anal. Appl. 31~(4) (2010)
  1688--1714.

\bibitem{Lancaster1970}
P.~Lancaster, Explicit solutions of linear matrix equations, SIAM Rev. 12~(4)
  (1970) 544--566.

\bibitem{li2010numerical}
Z.~Y. Li, B.~Zhou, Y.~Wang, G.~R. Duan, Numerical solution to linear matrix
  equation by finite steps iteration, IET Control Theory Appl. 31~(1) (1994)
  227--251.

\bibitem{Lin2009}
Y.~Lin, L.~Bao, Y.~Wei, Order reduction of bilinear {MIMO} dynamical systems
  using new block {K}rylov subspaces, Comput. Math. Appl. 58~(6) (2009)
  1093--1102.

\bibitem{Palitta2016a}
D.~Palitta, V.~Simoncini, Computationally enhanced projection methods for
  symmetric {S}ylvester and {L}yapunov equations, Tech. rep., Alma Mater
  Studiorum -- University of Bologna, arXiv preprint arXiv:1602.05033,
  submitted (2016).

\bibitem{Palitta2016}
D.~Palitta, V.~Simoncini, Matrix-equation-based strategies for
  convection--diffusion equations, BIT 56~(2) (2016) 751--776.

\bibitem{Powell2017}
C.~E. Powell, D.~Silvester, V.~Simoncini, An efficient reduced basis solver for
  stochastic {G}alerkin matrix equations, SIAM J. Sci. Comput. 39~(1) (2017)
  A141--A163.

\bibitem{Richter:1993:EFFICIENT}
S.~Richter, L.~D. Davis, E.~G. {Collins Jr}, Efficient computation of the
  solutions to modified {L}yapunov equations, SIAM J. Matrix Anal. Appl. 14~(2)
  (1993) 420--431.

\bibitem{ringh2016sylvester}
E.~Ringh, G.~Mele, J.~Karlsson, E.~Jarlebring, {Sylvester}-based
  preconditioning for the waveguide eigenvalue problem, Tech. rep., {KTH Royal
  Institute of Technology}, arXiv preprint arXiv:1610.06784, submitted (2016).

\bibitem{Shank2016}
S.~D. Shank, V.~Simoncini, D.~B. Szyld, Efficient low-rank solution of
  generalized {L}yapunov equations, Numer. Math. 134~(2) (2016) 327--342.

\bibitem{Simoncini2007}
V.~Simoncini, A new iterative method for solving large-scale {L}yapunov matrix
  equations, SIAM J. Sci. Comput. 29~(3) (2007) 1268--1288.

\bibitem{Simoncini:2016:Computational}
V.~Simoncini, Computational methods for linear matrix equations, SIAM Rev.
  58~(3) (2016) 377--441.

\bibitem{knizhnerman2010new}
V.~Simoncini, L.~Knizhnerman, A new investigation of the extended {Krylov}
  subspace method for matrix function evaluations, Numer. Linear Algebra Appl.
  17~(4) (2010) 615--638.

\bibitem{vandereycken2010riemannian}
B.~Vandereycken, S.~Vandewalle, A {Riemannian} optimization approach for
  computing low-rank solutions of {Lyapunov} equations, SIAM J. Matrix Anal.
  Appl. 31~(5) (2010) 2553--2579.

\bibitem{Wachspress2013}
E.~Wachspress, The {ADI} model problem, Springer, New York, 2013.

\end{thebibliography}

\end{document}